\newtheorem{lemma}{Lemma}[section]
\newtheorem{theorem}{Theorem}[section]
\newtheorem{algorithm}{Algorithm}[section]
\theoremstyle{remark}
\theoremstyle{definition}
\newtheorem{remark}{Remark}[section]
\newtheorem{example}{Example}[section]
\journal{}
\begin{document}

\begin{frontmatter}

\title{A Conjugate Gradient Method for Nonlinear Programming Problems using Caputo Fractional Gradients}
\author{Barsha Shaw$^{a}$, Md Abu Talhamainuddin Ansary$^{a,}$\footnote{Corresponding author: md.abutalha2009@gmail.com}}
\address{$^{a}$Department of Mathematics, IIT Jodhpur, Jodhpur-342030, Rajasthan, India.}
\begin{abstract}
The article proposes a Caputo fractional conjugate gradient (CFCG) method for unconstrained optimization problems which is applicable to smooth as well as non-smooth problmes. The proposed method uses a non-adaptive version of the Caputo fractional derivative that provides integer-order derivatives information. A descent direction is obtained using the Caputo fractional gradients of two consecutive iterative points with a parameter ($\beta$). An inexact line search technique based on Armijo-Wolfe line conditions is used to find a suitable step length. Finally, a descent sequence is generated. The convergence results are derived under mild assumptions that ensuring of convergence is at least linear. Moreover, the convergence of the proposed method for quadratic functions is established through a Tikhonov-regularized formulation that can be interpreted as an extension of the least-squares approach. 
Finally, some numerical experiments, including neural network applications, are performed to justify that the proposed method achieves faster and more stable performance. 
\end{abstract}
\begin{keyword}
Unconstrained optimization; conjugate gradient method; Caputo fractional derivative; Tikhonov regularization; Neural networks.

\textbf{AMS subject classifications.} {26A33; 90C52; 65K10; 49M05}
\end{keyword}

\end{frontmatter}

\section{Introduction}
Optimization is a fundamental discipline with applications across nearly every field, where problems are typically formulated as the minimization or maximization of real-valued functions. To address such problems, one of the earliest and most intuitive techniques is the gradient descent method for non-linear optimization problems. This approach iteratively updates the solution by moving in the direction of the negative gradient, which represents the path of the fastest decrease in the function value. This is known as the steepest descent method.
However, steepest descent can suffer from slow convergence, especially in ill-conditioned problems. This drawback has led to the development of faster methods such as conjugate gradients (CG) (\cite{steepest1,Nocedal}). To achieve better convergence with lower computational cost, various forms of the conjugate gradient parameter $\beta $ have been developed. Notable examples include those proposed in \cite{FR,CD,DY,HS,PRP}. These methods generally rely on smooth objective functions. To handle non-smooth functions, proximal techniques have been introduced. The steepest proximal descent, the proximal CG, and the proximal Newton methods are discussed in \cite{proximal2,proximal_Newton}. These methods have many real-life applications, as shown in \cite{application1, application2, application3}.

\par An alternative approach for handling non-smooth functions involves the use of non-integer order derivatives, known as fractional derivatives. To capture both the non-smooth behaviour and the information associated with non-integer order differentiation, the concept of fractional calculus becomes useful. Furthermore, due to the integration term involved in the definition of a fractional derivative, it reflects long-memory effects and nonlocal characteristics, which cannot be represented by integer-order derivatives since they depend only on the local behaviour of functions. Additionally, due to the lack of memory effect, steepest descent, conjugate gradient etc method goes in a zigzag way. For this reason, fractional derivative played an advantageous role in research. In the theory of fractional derivatives, among the various existing definitions, the Caputo fractional derivative and the Riemann–Liouville (R–L) derivative are the most widely used. The study of fractional calculus has gained interest due to its applications across many fields such as mechanical and electrical engineering, signal processing, quantum mechanics, bioengineering, biomedicine, financial systems, machine learning, and neural networks (\cite{A1,A2,A3,A4}). The R-L fractional derivative  \( {}_aD^{\alpha}_x \) and the Caputo fractional derivative  \( {}_a^cD^{\alpha}_x \) are two standard formulations is present in \cite{20}. In this article, monotonicity results for both definitions are presented on the interval \( (0,1) \) which are later extended to generalized intervals \( (n, n+1) \) for \( n \in \mathbb{N} \) in \cite{Barsha1}. Several theoretical results for fractional derivatives can also be found in \cite{23}. A necessary optimality condition by using the Caputo derivative is developed in \cite{21}. The Taylor–Riemann series and mean value theorem for multivariate functions involving the Caputo derivative are introduced in \cite{MVT,27}. A gradient descent method based on Caputo derivatives is also proposed there, which relies on a fractional Taylor expansion. Further motivation for employing the Caputo fractional derivative in steepest descent methods can be found in \cite{Barsha2}. However, to reduce high computational complexity and the number of iterations, different strategies can be adopted. For this reason, the present article focuses on a Caputo fractional conjugate gradient method. Main contributions of this article is as follows:
\begin{itemize}
    \item A conjugate gradient method is developed for nonlinear optimization using an fractional order Caputo fractional gradient.
    \item An algorithm is proposed with its convergence results which justified under mild assumptions (Theorems \ref{convergence thm}-\ref{conv_trik}).
    \item The proposed method is compared with existing methods using numerical examples based on neural networks. We have adopted some ideas from \cite{neural1,neural2,NN,NN1} in this regard.
\end{itemize}
\par The structure of the article is as follows.
Section~\ref{sec1} provides the basic definitions and preliminaries required for present developments. In Section~\ref{sec2}, the CFCG method is introduced with different forms of \( \beta \). Section~\ref{sec3} proposed an algorithm for the convergence analysis, specifically for Tikhonov regularisation solutions. Section~\ref{sec4} gives numerical results to show the performance of the method, including examples of neural networks. Section~\ref{sec5} concludes this article with some suggestions on possible directions for future research.
\section{Preliminaries}\label{sec1}
We consider the unconstrained optimization problem
\[
{(P)}: \quad \min_{x \in \mathbb{R}^n} f(x)
\]
where $f:\mathbb{R}^n\rightarrow\mathbb{R}$ is a continuous function. The steepest descent method is considered a classical numerical technique for solving $(P)$ type problem. In this method, every iterating point \( x^k \) moves along the descent direction \(d^k = -\nabla f(x^k) \). However, the linear rate of convergence is a major limitation of this method. This method was further modified in different directions to achieve accelerated convergence. The CG method is one such modification. In this method, the following search direction at $x^k$ is
\[
d^{k} =\begin{cases}
 -\nabla f(x^{0}),\quad \quad \quad \quad~\qquad\space k=0\\
 -\nabla f(x^{k}) + \beta_{k}\, d^{k-1}, \qquad k \ge 1,

\end{cases} 
\]
In literature, different choices \( \beta ~(\text{for}~k\geq 1)\) are used for the CG method to improve convergence speed. In this article, we have considered the following different types of $\beta$
\begin{align*}
    &\text{Fletcher-Reever}~ (\cite{FR}): \beta_k^{FR} = \frac{\langle \nabla f(x^k), \nabla f(x^k) \rangle}{\langle \nabla f(x^{k-1}), \nabla f(x^{k-1}) \rangle} \\
    &\text{Conjugate-Descent (\cite{CD})}: \beta_k^{CD} = -\frac{\langle \nabla f(x^k), \nabla f(x^k) \rangle}{\langle d^{k-1}, \nabla f(x^{k-1}) \rangle} \\
    &\text{Dai-Yuan ($DY$) (\cite{DY})}: \beta_k^{DY} = \frac{\langle \nabla f(x^k), \nabla f(x^k) \rangle}{\langle d^{k-1}, \nabla f(x^k) - \nabla f(x^{k-1}) \rangle} \\
    &\text{Polak-Ribiere-Polyak (\cite{PRP})}: \beta_k^{PRP} = \frac{\langle \nabla f(x^k), \nabla f(x^k) - \nabla f(x^{k-1}) \rangle}{\langle \nabla f(x^{k-1}), \nabla f(x^{k-1}) \rangle} \\
    &\text{Hestenes-Stiefel (\cite{HS})}: \beta_k^{HS} = \frac{\langle \nabla f(x^k), \nabla f(x^k) - \nabla f(x^{k-1}) \rangle}{\langle d^{k-1}, \nabla f(x^k) - \nabla f(x^{k-1}) \rangle}.
\end{align*}
Throughout this article, the convergence analysis of the proposed CFCG method requires the search direction satisfy sufficient descent condition
$
\nabla f(x^k)^\top d^k < -r\,\|d^k\|^2,~r > 0.$

Next, step size \( \eta_k \) is determined through using the following Armijo-Wolfe conditions,
\begin{align*}
    & f(x^k + \eta_k~ d^k) \leq f(x^k) + c_1 \eta_k~ \nabla f(x^k)^\top d^k, \\
    & \nabla f(x^k + \eta_k~ d^k)^\top d^k \geq c_2 \nabla f(x^k)^\top d^k,
\end{align*}
where \( 0 < c_1 < c_2 < 1 \). Finally, the next iterated point is calculated as $x^{k+1} = x^{k} + \eta_k d^{k}$.

 \par To accommodate non-smooth behaviour, we work with the class \( A^n[a,b] \) of functions that are \( n \)-times differentiable and whose \( n+1 \)-th derivative is absolutely continuous on \([a,b]\). For a function \( f \in A^n[a,b] \), the Caputo fractional derivative of order \( \alpha \in (n, n+1) \) at a point \( x \in [a,b] \) is defined as 

  \begin{equation*}
      {}^C_cD^{\alpha}_xf(x) = \frac{1}{\Gamma(n-\alpha)}  \int_{c}^{x}(x-\tau) ^{n-\alpha-1} f^{(n)}(\tau)d\tau,
       \end{equation*}
 A multivariate continuous function, the fractional gradient denoted by $^C_c\bar{\nabla}^{\alpha}_xf(x)$ \cite{38}, is defined as for $c\in\mathbb{R}^n$, 
 \begin{equation*}
     ^C_c\bar{\nabla}^{\alpha}_xf(x) =\Big(~
         ^C_{c_1}D^{\alpha}_{x_1}f(x), ~
         ^C_{c_2}D^{\alpha}_{x_2}f(x), \dots,~ ^C_{c_n}D^{\alpha}_{x_n}f(x)\Big)^\top.
 \end{equation*} 
 Although based on multi-variate fractional gradient, a mean value theorem for the  Caputo fractional derivative is available (Theorem~15 in \cite{MVT}) from Taylor series. But this formulation is unable to give an optimal solution. Throughout this article, we use a modified Caputo derivative obtained from the classical Taylor series expansion. Therefore, Caputo derivative is used in the place of classical derivative form of the mean value theorem.
\begin{lemma}\label{thm1}(Theorem 3.2, \cite{27})
Let $ f_{\alpha,\rho}: \mathbb{R}^n\rightarrow \mathbb{R} $ admit the following Taylor series expansion around $c\in\mathbb{R}^n$ for some $ \alpha \in (0,1) $ and $ \rho \in \mathbb{R} $
\begin{equation}\label{Taylor}
{}f_{\alpha,\rho}(x) = f(c) + \nabla f(c)^\top (x - c) + \frac{1}{2} C_{\alpha,\rho}~ (x-c)^\top H(\xi)(x - c)
\end{equation}
where $C_{\alpha,\rho} = \frac{\Gamma(2-\alpha)\Gamma(2)}{\Gamma(3-\alpha)} + \rho $, $H$ is  the Hessian of function $f_{\alpha,\rho}$ based on integer-order derivative where $c<\xi<x$. The integer order gradient coincides with the Caputo gradient at $c$. We define the Caputo fractional gradient as
\[
{}^C_c\nabla^{\alpha}_x~{}f(x) 
= diag\Big({}^C_c\nabla^{\alpha}_x I(x)\Big)^{-1}
\Big( {}^C_c\nabla^{\alpha}_x f_{\alpha,\rho}(x) + \rho\, diag(|x - c|)\cdot {}^C_c\nabla^{1+\alpha}_x f_{\alpha,\rho}(x)\Big),
\]
where $ I : \mathbb{R}^n \to \mathbb{R} $ is the identity map.
\end{lemma}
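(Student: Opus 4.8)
The plan is to take the expansion \eqref{Taylor} as a hypothesis (it is just the second-order Taylor theorem with the quadratic coefficient rescaled by $C_{\alpha,\rho}$), to check that the displayed formula for $^C_c\nabla^{\alpha}_x f$ is well posed, and then to read off the asserted coincidence with $\nabla f(c)$ by passing to the limit $x\to c$. Every operator in the definition acts one coordinate at a time (the $i$-th slot Caputo-differentiates only in $x_i$, all other coordinates being frozen, and $\mathrm{diag}(\cdot)^{-1}$ is a diagonal rescaling), so I would fix an index $i$, compute the $i$-th component, and assemble the vector at the end.

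The only computational input is the single-variable Caputo power rule: for $x_i>c_i$ and $p\ge 1$,
\begin{gather*}
{}^C_{c_i}D^{\alpha}_{x_i}(x_i-c_i)^p=\frac{\Gamma(p+1)}{\Gamma(p+1-\alpha)}(x_i-c_i)^{p-\alpha},\\
{}^C_{c_i}D^{1+\alpha}_{x_i}(x_i-c_i)^p=\frac{\Gamma(p+1)}{\Gamma(p-\alpha)}(x_i-c_i)^{p-1-\alpha},
\end{gather*}
together with the facts that the Caputo derivative annihilates constants and that $^C_{c_i}D^{1+\alpha}_{x_i}$ annihilates the affine part. Taking $p=1$ for the identity map gives $^C_{c_i}D^{\alpha}_{x_i}I=(x_i-c_i)^{1-\alpha}/\Gamma(2-\alpha)$, which is nonzero for $x_i\ne c_i$; hence $\mathrm{diag}\big({}^C_c\nabla^{\alpha}_x I\big)$ is invertible away from $c$ and the definition is well posed.

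Next I would substitute \eqref{Taylor} into the two fractional gradients. In the $i$-th slot the affine part contributes only through $\partial_i f(c)(x_i-c_i)$, giving $\partial_i f(c)(x_i-c_i)^{1-\alpha}/\Gamma(2-\alpha)$ under $^C\nabla^{\alpha}$ and nothing under $^C\nabla^{1+\alpha}$; the quadratic part $\tfrac12 C_{\alpha,\rho}(x-c)^\top H(\xi)(x-c)$ contributes a diagonal piece in $(x_i-c_i)^2$ plus off-diagonal pieces linear in $x_i$, of which only the diagonal piece survives $^C\nabla^{1+\alpha}$. Forming $^C_c\nabla^{\alpha}_x f_{\alpha,\rho}+\rho\,\mathrm{diag}(|x-c|)\,{}^C_c\nabla^{1+\alpha}_x f_{\alpha,\rho}$ and left-multiplying by $\mathrm{diag}\big({}^C_c\nabla^{\alpha}_x I\big)^{-1}$, the normalisation exactly cancels the common factor $(x_i-c_i)^{1-\alpha}/\Gamma(2-\alpha)$. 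Here the role of the constant is transparent: $C_{\alpha,\rho}=\Gamma(2-\alpha)\Gamma(2)/\Gamma(3-\alpha)+\rho$ is precisely the value for which the bracket $\Gamma(2-\alpha)\big(1/\Gamma(3-\alpha)+\rho/\Gamma(2-\alpha)\big)$ multiplying the surviving Hessian term collapses to $C_{\alpha,\rho}$ (using $\Gamma(2)=1$). The $i$-th component therefore reduces to $\partial_i f(c)$ plus Hessian-weighted terms that are $O(|x-c|)$, and letting $x\to c$ leaves exactly $\partial_i f(c)$; assembling over $i$ gives $^C_c\nabla^{\alpha}_x f(x)\to\nabla f(c)$, which is the coincidence at $c$.

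I expect the main obstacle to be bookkeeping rather than depth. First, the power rule as written is for $x_i>c_i$; for $x_i<c_i$ one must use the corresponding one-sided Caputo derivative, and it is exactly the factor $|x-c|$ in the correction term that makes the two one-sided computations agree, so this sign handling has to be done carefully rather than waved away. Second, the coefficient $\xi$ in \eqref{Taylor} depends on $x$ through $c<\xi<x$; but because the entire Hessian remainder is multiplied by $|x-c|$, mere boundedness of $H(\xi)$ near $c$ (guaranteed by continuity of the integer-order Hessian) is enough to kill it in the limit, so no refined control of $\xi\to c$ is required.
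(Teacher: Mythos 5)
The paper does not actually prove this statement: Lemma~\ref{thm1} is imported verbatim as Theorem~3.2 of \cite{27}, and no argument for it appears anywhere in the text, so there is no in-paper proof to compare yours against. Your reconstruction is, as far as I can check, the correct and standard derivation of the claim that is actually asserted (coincidence of the Caputo and integer-order gradients at $c$): the Caputo power rule gives ${}^C_{c_i}D^{\alpha}_{x_i}I=(x_i-c_i)^{1-\alpha}/\Gamma(2-\alpha)$, the affine part of \eqref{Taylor} reproduces $\partial_i f(c)$ exactly after the diagonal normalisation, and every Hessian contribution carries an extra factor of $(x_i-c_i)$ or $(x_j-c_j)$ after cancellation, so it vanishes as $x\to c$; your identification of $C_{\alpha,\rho}=\Gamma(2-\alpha)/\Gamma(3-\alpha)+\rho$ as the collapsed coefficient of the surviving diagonal term is also consistent with the constant $\gamma_{\alpha,\rho}=\rho-\tfrac{1-\alpha}{2-\alpha}$ the paper later uses for quadratics. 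Two caveats, both of which you flag but should not understate if you write this out in full: (i) the power rule is applied to $(x_i-c_i)^2 H_{ii}(\xi)$ as though $H(\xi)$ were constant, whereas $\xi$ depends on $x$ through the Lagrange remainder, so strictly one must either freeze $H$ (as the source does implicitly) or argue that the resulting error is still $O(\|x-c\|)$ under continuity of $H$; and (ii) the one-sided nature of the Caputo operator for $x_i<c_i$ needs the explicit right-sided definition to make the $|x-c|$ bookkeeping rigorous rather than asserted. Neither is a gap in the idea, but both are places where the lemma as stated is informal and your proof inherits that informality.
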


\section{ Caputo fractional conjugate gradient method.}\label{sec2}
The primary focus of this section is to implement the Caputo fractional derivative in the context of CG method. The motivation for using the Caputo derivative instead of the classical derivative is briefly discussed in \cite{Barsha2}. We have used $\alpha=0.9$ and $\rho =0.1$ to find fractional gradient at any iteration point $x^k$. A change in the value of $\alpha$ yields valuable insights into the behaviour of the non-integer order derivative. These findings highlight the enhanced performance of the proposed CFCG method. In addition to this, the theoretical framework of the CFCG method is developed in this section. 
\par Following the arguments of CG methods, a search direction $d^k$ of $f$ at point $x^k$ is defined as  

\begin{equation}\label{eq:d}
d^{k} =
\begin{cases}
-\,{}^{C}_{c}\nabla_{x}^{\alpha} f(x^{k}), & k=0,\\[4pt]
-\,{}^{C}_{c}\nabla_{x}^{\alpha} f(x^{k}) + \beta_{k}\, d^{k-1}, & k\ge 1,
\end{cases}
\end{equation}
where $\beta_{k}$ is an algorithmic parameter. For simplicity ${}^{C}_{c}\nabla_{x}^{\alpha} f(x^k)$ is written  as \(g^k\), throughout the article. 
A sequence $\{x^k\}$ for $k>0$ is generated by,
\begin{equation*}
x^{k+1} = x^{k} + \eta_k d^{k}.
\end{equation*}
Different choices of \(\beta_{k}\) lead to different variants of the method. Since we are using the search directions of two consecutive iterations under the proper selection of step size ($\eta_k$) is necessary to guarantee that $d^k$ is a descent direction of $f$ at $x^k$ for every $k\geq1$. In our derivations, we have used the following fractional Armijo-Wolfe inexact line search techniques using Caputo fractional gradient
\begin{align}
      & f(x^k+ \eta_k d^k)\leq f(x^k) + c_1 \eta_k ~g^k{^\top} d^k\label{eq9}\\
       & {}^{C}_{c}\nabla_{x}^{\alpha} f(x^k+ \eta_k d^k){^\top} d^k\geq c_2 ~g^k{^\top} d^k.\label{eq8}
   \end{align}
with $0 < c_1< c_2 < 1$ prespecified scalars. Inequality \eqref{eq9} guarantees a sufficient decrease in objective function that step sizes are not too large while \eqref{eq8} ensures that they are not too small within the specified interval.

 The following lemma shows that the step size procedures are well defined.
\begin{lemma}\label{lemma2}
 Suppose $f:\mathbb{R}^n \to \mathbb{R}$ is an absolutely continuous and bounded below function and $d^k$ be a descent direction to $f$ at $x^k$. Then there exists some ${\eta_k}>0$ satisfying \eqref{eq9} and \eqref{eq8}. 
\end{lemma}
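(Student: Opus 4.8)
The plan is to reduce the statement to a one-dimensional analysis along the ray $\eta \mapsto x^k + \eta d^k$ and then mimic the classical Wolfe existence argument, with the Caputo directional derivative playing the role of the ordinary one. First I would define the restriction $\phi(\eta) = f(x^k + \eta d^k)$ for $\eta \ge 0$; since $f$ is absolutely continuous, $\phi$ is absolutely continuous on every bounded interval, and the descent hypothesis gives $g^k{}^\top d^k < 0$. I then introduce the Armijo reference line $\ell(\eta) = f(x^k) + c_1 \eta\, g^k{}^\top d^k$, whose slope $c_1 g^k{}^\top d^k$ is strictly negative because $c_1 > 0$ and $d^k$ is a descent direction, so $\ell(\eta) \to -\infty$ as $\eta \to \infty$.

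The second step is to produce the first Armijo crossing. Since $f$ is bounded below, $\phi$ is bounded below, whereas $\ell$ is unbounded below; because $\phi(0) = \ell(0) = f(x^k)$ and $\ell$ eventually drops below the lower bound of $\phi$, continuity and the intermediate value theorem guarantee a smallest $\bar\eta > 0$ with $\phi(\bar\eta) = \ell(\bar\eta)$. For every $\eta \in [0,\bar\eta]$ one then has $\phi(\eta) \le \ell(\eta)$, so the Armijo inequality \eqref{eq9} holds throughout $[0,\bar\eta]$.

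For the curvature (Wolfe) condition I would exploit absolute continuity directly: writing $\phi(\bar\eta) - \phi(0) = \int_0^{\bar\eta} \phi'(t)\,dt$ and recalling $\phi(\bar\eta) - \phi(0) = c_1 \bar\eta\, g^k{}^\top d^k$, the average value of $\phi'$ over $[0,\bar\eta]$ equals $c_1 g^k{}^\top d^k$. Hence the set of $\eta \in (0,\bar\eta)$ at which $\phi'(\eta)$ exists and satisfies $\phi'(\eta) \ge c_1 g^k{}^\top d^k$ is nonempty. Choosing such an $\eta_k$ and using $g^k{}^\top d^k < 0$ together with $c_1 < c_2$ yields $\phi'(\eta_k) \ge c_1 g^k{}^\top d^k > c_2 g^k{}^\top d^k$, which is the curvature inequality \eqref{eq8}; since $\eta_k \in (0,\bar\eta)$, the Armijo inequality of the previous step persists, so both conditions hold at $\eta_k$.

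The main obstacle I anticipate is the mismatch between the two notions of derivative: the secant slope of $\phi$ is governed by the ordinary directional derivative $\nabla f(x^k+\eta d^k)^\top d^k$ through the chain rule, whereas condition \eqref{eq8} is stated in terms of the Caputo fractional gradient ${}^{C}_{c}\nabla_{x}^{\alpha} f(x^k+\eta d^k){}^\top d^k$. The delicate step is therefore to invoke the modified-Caputo construction of Lemma~\ref{thm1} — in particular the fact that the integer-order gradient coincides with the Caputo gradient at the base point and that the two are linked through the $\mathrm{diag}$-scaling appearing there — so as to obtain a fractional mean value comparison that legitimately replaces $\nabla f$ by ${}^{C}_{c}\nabla_{x}^{\alpha} f$ in the averaging argument above. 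Once this substitution is justified, the remaining inequalities reduce to routine sign manipulations, and a final continuity remark shows that the set of admissible step sizes is an interval of positive length rather than an isolated point.
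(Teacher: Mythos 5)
Your strategy is the same as the paper's: both proofs construct the Armijo reference line, use boundedness below of $f$ versus unboundedness of the line to extract the first crossing $\bar\eta$ (the paper's $\eta_k'$), observe that the Armijo inequality \eqref{eq9} holds on all of $(0,\bar\eta)$, and then apply a mean-value argument on $[0,\bar\eta]$ together with $c_1<c_2$ and $g^k{}^\top d^k<0$ to locate a point where the curvature condition \eqref{eq8} holds, finishing with a continuity remark that the admissible step sizes form an interval.

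The one substantive difference is how the mean-value step is executed. You work with the ordinary derivative $\phi'(\eta)=\nabla f(x^k+\eta d^k)^\top d^k$ via the integral form $\phi(\bar\eta)-\phi(0)=\int_0^{\bar\eta}\phi'(t)\,dt$, and then explicitly defer the replacement of $\nabla f$ by ${}^{C}_{c}\nabla_{x}^{\alpha} f$ to an unproven ``fractional mean value comparison.'' That deferred step is precisely the content of the lemma: condition \eqref{eq8} is stated in the Caputo gradient, so without it you have proved existence of a step size satisfying a \emph{classical} Wolfe condition, not \eqref{eq8}. The paper closes this by invoking the mean value theorem directly in Caputo form (its equation \eqref{eq11}), i.e.\ $f(x^k+\eta_k'd^k)-f(x^k)=\eta_k'\,{}^{C}_{c}\nabla_{x}^{\alpha} f(x^k+\eta_k''d^k)^\top d^k$, resting on the fractional mean value theorem cited in the preliminaries (Theorem~15 of the reference \cite{MVT}) and on the construction of Lemma~\ref{thm1}, under which the Caputo gradient is a rescaling of the integer-order one. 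Your integral/averaging formulation is actually the more careful one for a function that is merely absolutely continuous (the pointwise MVT is not directly available there), so the cleanest repair is to state and use the Caputo analogue of the averaging identity rather than leave the substitution as an obstacle; as written, your argument is a correct skeleton with its key joint left unfastened, whereas the paper fastens it by fiat through the fractional MVT.
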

\begin{proof}
    Since $f$ is bounded below, $\phi(\eta;(x^k,d^k))$ defined by $\phi(\eta;(x^k,d^k)):= f(x^k + \eta d^k)$ is bounded below for all $\eta>0.$ Since $0<c_1<c_2<1,$ and $d^k$ is descent direction of $f$ at point $x^k$ then let construct an auxiliary function $$l(\eta_k;(x^k,d^k))= f(x^k)+ c_1~\eta_k~~g^k{^\top}~d^k$$ is unbounded below for all $\eta_k>0$. Then it must intersect the graph $\phi$ at least once. 
    Let $\eta_k'>0$ be the smallest intersecting value of $\eta_k$, and thus
    \begin{equation}\label{eq10}
        f(x^k+\eta_k' d^k)= f(x^k)+ c_1~\eta_k'~g^k{^\top}~d^k.
    \end{equation}
    The sufficient decrease condition \eqref{eq9} holds for all step size less than $\eta_k'$. Thus choose $\eta_k\in(0,\eta_k')$ such that \eqref{eq9} holds that is
    \begin{equation*}
        f(x^k+\eta_k d^k)\leq f(x^k)+ c_1~\eta_k~g^k{^\top}~d^k.
    \end{equation*}
    By mean-value theorem there exists $\eta_k''\in(0,\eta'_k)$ such that 
    \begin{equation}\label{eq11}
        f(x^k+\eta_k' d^k)-f(x^k)= \eta_k' ~^C_{c} \nabla_{x}^{\alpha} f(x^k+\eta_k'' d^k)^\top~ d^k.
    \end{equation}
    By combining \eqref{eq10} and \eqref{eq11}, we have
    \begin{equation*}
    ~^C_{c} \nabla_{x}^{\alpha} f(x^k+\eta_k'' d^k)^\top~ d^k =  c_1~g^k{^\top}~d^k  > c_2~ g^k{^\top}~d^k. 
    \end{equation*}
Since $0<c_1<c_2<1$ and $g^k{^\top}~d^k<0$ and $f$ is continuous, there is an interval around $\eta_k''$ for which Wolfe conditions \eqref{eq8} hold.
\end{proof}
Next we need to show that $d^k$ ensures sufficient decease for all $k$. Since $d^0$ is same as the descent direction of Caputo fractinal gradient based descent method in \cite{27}, $d^0$ provides sufficient decrease. For different choice of $\beta$, we have proved a sufficient descent direction $d^k$ for $r>0$ such that 

 \begin{equation}\label{sufficient_dD}
     g^k{^\top} d^k \leq -r\,\|g^k\|^2
 \end{equation}
 holds for all $k$, in the following lemmas (Lemmas \ref{theorem1}-\ref{theorem3}).

\begin{lemma}\label{theorem1}(For $FR$ type $\beta$)
Let \( f:\mathbb{R}^n \to \mathbb{R} \) be an absolute function.  
Suppose that the sequence \( \{x^k\} \) is generated by the CG method, and the search direction $d^k$ is computed by $d^k = -g^k + \beta_k d^{k-1}, $ where  $\beta_k^{FR} = \frac{\|g^k\|^2}{\|g^{k-1}\|^2}$.
Then \( d^k \) satisfies a sufficient descent direction \eqref{sufficient_dD}. 
\end{lemma}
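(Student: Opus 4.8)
The plan is to establish the stronger two-sided estimate
\[
-\frac{1}{1-c_2}\;\le\; t_k \;:=\; \frac{(g^k)^\top d^k}{\|g^k\|^2}\;\le\; -\frac{1-2c_2}{1-c_2}
\]
for every $k$ by induction, following the classical Al-Baali argument adapted to the Caputo fractional gradient $g^k$. Once this holds, the sufficient descent condition \eqref{sufficient_dD} follows at once with $r = \frac{1-2c_2}{1-c_2}>0$, provided the curvature parameter is restricted to $c_2<\tfrac12$. For the base case $k=0$, the direction $d^0=-g^0$ gives $t_0=-1$, and one checks directly that $-\frac{1}{1-c_2}\le -1\le -\frac{1-2c_2}{1-c_2}$ for every $c_2\in(0,1)$, since both inequalities reduce to $c_2\ge 0$.

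For the inductive step I would take the inner product of the recurrence $d^k=-g^k+\beta_k^{FR}d^{k-1}$ with $g^k$ and divide by $\|g^k\|^2$. Using $\beta_k^{FR}=\|g^k\|^2/\|g^{k-1}\|^2$, the $\|g^k\|^2$ factors cancel and produce the clean identity
\[
t_k = -1 + \frac{(g^k)^\top d^{k-1}}{\|g^{k-1}\|^2}.
\]
The cross term $(g^k)^\top d^{k-1}$ is then controlled by the curvature condition \eqref{eq8}: writing $x^k=x^{k-1}+\eta_{k-1}d^{k-1}$, condition \eqref{eq8} reads $(g^k)^\top d^{k-1}\ge c_2\,(g^{k-1})^\top d^{k-1}$. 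To get a two-sided control I would use the strong form $|(g^k)^\top d^{k-1}|\le -c_2\,(g^{k-1})^\top d^{k-1}$, whose right-hand side is positive because $d^{k-1}$ is a descent direction. Dividing by $\|g^{k-1}\|^2$ and invoking the induction hypothesis $t_{k-1}\ge -\frac{1}{1-c_2}$ confines $(g^k)^\top d^{k-1}/\|g^{k-1}\|^2$ to the interval $\bigl[-\frac{c_2}{1-c_2},\,\frac{c_2}{1-c_2}\bigr]$. Substituting these bounds into the identity for $t_k$ yields exactly $-\frac{1}{1-c_2}\le t_k\le -\frac{1-2c_2}{1-c_2}$, closing the induction.

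The main obstacle I anticipate is the curvature bound itself: the line search condition as written in \eqref{eq8} is only the one-sided Wolfe inequality and cannot by itself control $|(g^k)^\top d^{k-1}|$, so the argument genuinely needs the strong Wolfe condition together with $c_2<\tfrac12$ (otherwise the upper bound $-\frac{1-2c_2}{1-c_2}$ ceases to be negative and descent is lost). A secondary point requiring care is compatibility between the Caputo gradient $g^k$ arising from the Taylor/mean-value structure of Lemma~\ref{thm1} and the line search of Lemma~\ref{lemma2}, so that $g^k$ behaves like a genuine gradient when the curvature inequality is applied; once that is granted, the remaining steps are the routine algebraic cancellations indicated above.
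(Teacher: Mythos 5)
Your proposal is correct, and it takes a genuinely different route from the paper: it is Al-Baali's classical two-sided induction on $t_k=(g^k)^\top d^k/\|g^k\|^2$, whereas the paper argues in one step from the identity $g^k{}^\top d^k=-\|g^k\|^2\bigl(1-\tfrac{g^k{}^\top d^{k-1}}{\|g^{k-1}\|^2}\bigr)$, invoking the curvature condition \eqref{eq8} and the inductive bound $g^{k-1}{}^\top d^{k-1}\le -r_1\|g^{k-1}\|^2$ to conclude $g^k{}^\top d^k\le -(1+c_2r_1)\|g^k\|^2$. The comparison favours your version on rigour: the paper's step needs an \emph{upper} bound on $g^k{}^\top d^{k-1}$, while \eqref{eq8} as stated only supplies a lower bound, and the claimed constant $1+c_2r_1>1$ cannot hold in general (with an exact line search $g^k{}^\top d^{k-1}=0$, so $g^k{}^\top d^k=-\|g^k\|^2$ and no constant exceeding $1$ is attainable). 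Your induction, which uses the strong Wolfe bound $|(g^k)^\top d^{k-1}|\le -c_2\,(g^{k-1})^\top d^{k-1}$ and the restriction $c_2<\tfrac12$, supplies exactly the missing upper bound and produces the correct, uniformly bounded (and degrading rather than improving) constant $r=\tfrac{1-2c_2}{1-c_2}$. What the paper's argument buys is brevity and the appearance of working under the weaker one-sided condition; what yours buys is a proof that actually closes, at the cost of two extra hypotheses --- the strong form of \eqref{eq8} and $c_2<\tfrac12$ --- which should then be added to the statement of the lemma, with Lemma~\ref{lemma2} extended to guarantee existence of step sizes satisfying the strong Wolfe condition (the same first-intersection argument works).
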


\begin{proof}
We have
\[
g^k{}^\top d^k = g^k{}^\top(-g^k + \beta_k d^{k-1}) 
= -\|g^k\|^2 + \beta_k^{FR}\, g^k{}^\top d^{k-1}
\]
substituted the expression of \( \beta_k^{FR} \), we obtain
\begin{align*}
g^k{}^\top d^k 
= &-\|g^k\|^2 + \frac{\|g^k\|^2}{\|g^{k-1}\|^2}\, g^k{}^\top d^{k-1} \\
= &-\|g^k\|^2 \left( 1 - \frac{g^k{}^\top d^{k-1}}{\|g^{k-1}\|^2} \right).
\end{align*}
From the Wolfe condition for \(0 < c_2 < 1\) and \(g^{k-1}{}^\top d^{k-1}\leq - r_1\|g^{k-1}\|^2, r_1>0
\) it follows that
\begin{align*}
g^k{}^\top d^k 
&\leq -\|g^k\|^2 \left( 1 - c_2  \frac{g^{k-1}{}^\top d^{k-1}}{\|g^{k-1}\|^2} \right)\\
&\leq -(1+ c_2 r_1)\|g^k\|^2,
\end{align*}
 it is clear that \( g^k{}^\top d^k <  -r\|g^{k}\|^2\) for $r= 1+c_2~r_1>0$.  
Thus, \( d^k \) satisfies sufficient descent direction for $FR$-type $\beta$.  
\end{proof}

\begin{lemma}\label{theorem2}(For $CD$ and $DY$ type $\beta$)
Let \( f:\mathbb{R}^n \to \mathbb{R} \) be an absolute function.  
Suppose that the sequence \( \{x^k\} \) is generated by the CG method, and the search direction $d^k$ is computed by $d^k = -g^k + \beta_k d^{k-1}, $ where 
\begin{itemize}
\item[1.] $\beta_k^{CD} = -\frac{g^k{}^\top g^k}{g^{k-1}{}^\top d^{k-1}}$
\item[2.] $\beta_k^{DY} = \frac{\|g^k\|^2}{d^{k-1}{}^\top (g^k - g^{k-1})},$
\end{itemize}
then \( d^k \) satisfies the sufficient descent direction condition \eqref{sufficient_dD}. 
\end{lemma}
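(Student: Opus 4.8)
The plan is to handle the $CD$ and $DY$ parameters by the same template used for the $FR$ case in Lemma~\ref{theorem1}: expand $g^k{}^\top d^k = -\|g^k\|^2 + \beta_k\, g^k{}^\top d^{k-1}$, substitute the given $\beta_k$, reduce the right-hand side to the two scalars $g^k{}^\top d^{k-1}$ and $g^{k-1}{}^\top d^{k-1}$, and then invoke the curvature condition \eqref{eq8} at step $k-1$ (recall $x^k = x^{k-1} + \eta_{k-1} d^{k-1}$) together with the inductive hypothesis $g^{k-1}{}^\top d^{k-1} \le -r_1\|g^{k-1}\|^2$ to fix the sign and size of the coefficient of $\|g^k\|^2$.

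For the $CD$ parameter, substituting $\beta_k^{CD} = -\|g^k\|^2/(g^{k-1}{}^\top d^{k-1})$ yields
\[ g^k{}^\top d^k = -\|g^k\|^2\left(1 + \frac{g^k{}^\top d^{k-1}}{g^{k-1}{}^\top d^{k-1}}\right). \]
Since $g^{k-1}{}^\top d^{k-1}<0$ by the descent property, the sign of the bracket is controlled by $g^k{}^\top d^{k-1}$. The curvature condition $g^k{}^\top d^{k-1} \ge c_2\, g^{k-1}{}^\top d^{k-1}$ bounds the ratio above by $c_2$; to bound it below I expect to need the two-sided (strong Wolfe) form $|g^k{}^\top d^{k-1}| \le -c_2\, g^{k-1}{}^\top d^{k-1}$, giving $-c_2 \le g^k{}^\top d^{k-1}/(g^{k-1}{}^\top d^{k-1}) \le c_2$. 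The bracket is then at least $1-c_2>0$, so \eqref{sufficient_dD} holds with $r=1-c_2$.

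For the $DY$ parameter the efficient route is the Dai--Yuan identity. Abbreviating $a=g^k{}^\top d^{k-1}$ and $b=g^{k-1}{}^\top d^{k-1}$, substituting $\beta_k^{DY}=\|g^k\|^2/(a-b)$ and simplifying collapses the expression to
\[ g^k{}^\top d^k = \frac{b}{a-b}\,\|g^k\|^2 = \frac{g^{k-1}{}^\top d^{k-1}}{d^{k-1}{}^\top(g^k-g^{k-1})}\,\|g^k\|^2. \]
Here $b<0$ by descent, while the curvature condition $a\ge c_2 b$ gives $a-b\ge (1-c_2)(-b)>0$, so the denominator is strictly positive and the coefficient is negative; the same estimate bounds its magnitude above by $1/(1-c_2)$, whence $g^k{}^\top d^k<0$.

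The main obstacle is upgrading these negativity statements to the \emph{uniform} bound \eqref{sufficient_dD} with a single $r>0$ valid for every $k$. For $CD$ this is precisely where the weak curvature condition as literally written in \eqref{eq8} is insufficient, since it does not sign $g^k{}^\top d^{k-1}$ and so cannot keep the bracket bounded away from zero; the strong Wolfe condition is what secures $r=1-c_2$. For $DY$ the delicate point is bounding the denominator $d^{k-1}{}^\top(g^k-g^{k-1})$ from \emph{above}, so that the coefficient $b/(a-b)$ cannot drift to $0$; here I would invoke Lipschitz continuity of the fractional gradient, estimating $|d^{k-1}{}^\top(g^k-g^{k-1})| \le L\eta_{k-1}\|d^{k-1}\|^2$, and combine it with an inductive bound relating $\|d^{k-1}\|$ and $\|g^{k-1}\|$ to obtain a uniform $r$.
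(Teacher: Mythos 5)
Your expansion and use of the curvature condition follow the same route as the paper's proof of this lemma, but the two ``obstacles'' you flag are not artefacts of your write-up: they are genuine gaps that the paper's own argument passes over. For the $CD$ case the paper concludes $r=1+c_2$, yet the one-sided condition \eqref{eq8} only gives $g^k{}^\top d^{k-1}\ge c_2\,g^{k-1}{}^\top d^{k-1}$, i.e.\ (dividing by the negative quantity $g^{k-1}{}^\top d^{k-1}$) an \emph{upper} bound $\tfrac{g^k{}^\top d^{k-1}}{g^{k-1}{}^\top d^{k-1}}\le c_2$ and hence an upper bound $1+c_2$ on the bracket $1+\tfrac{g^k{}^\top d^{k-1}}{g^{k-1}{}^\top d^{k-1}}$ --- the wrong direction; nothing in \eqref{eq8} prevents that bracket from being negative. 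Your diagnosis is exactly right: one needs the two-sided (strong Wolfe) bound $|g^k{}^\top d^{k-1}|\le -c_2\,g^{k-1}{}^\top d^{k-1}$, and the resulting constant is $r=1-c_2$, not $1+c_2$.

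For the $DY$ case your identity $g^k{}^\top d^k=\tfrac{b}{a-b}\,\|g^k\|^2$ with $a=g^k{}^\top d^{k-1}$, $b=g^{k-1}{}^\top d^{k-1}$ is the cleaner way to see what actually follows: $a-b\ge(1-c_2)(-b)>0$ yields $g^k{}^\top d^k<0$ together with $|g^k{}^\top d^k|\le\tfrac{1}{1-c_2}\|g^k\|^2$, which is an upper bound on the magnitude of the descent rather than the lower bound that \eqref{sufficient_dD} requires. The paper arrives at $r=1-\tfrac{c_2}{c_2-1}=\tfrac{1}{1-c_2}$ by substituting the Wolfe inequality as an equality, which is not a valid step. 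So your assessment that only strict descent, not uniform sufficient descent, follows from \eqref{eq8} alone is correct. Your proposed repair --- bounding $|d^{k-1}{}^\top(g^k-g^{k-1})|\le L\eta_{k-1}\|d^{k-1}\|^2$ from above via Lipschitz continuity --- is the natural direction, but it is left unfinished: you would still need to dominate $\eta_{k-1}\|d^{k-1}\|^2$ by a constant multiple of $-g^{k-1}{}^\top d^{k-1}$, which does not follow from the stated hypotheses. In short: your proposal takes essentially the same approach as the paper but is more honest about where it breaks, and both gaps you name are present in the published proof.
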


\begin{proof}
\begin{itemize}
    \item[1.] We compute
\[
g^k{}^\top d^k  = -\|g^k\|^2 + \beta_k^{CD}\, g^k{}^\top d^{k-1}.
\]
Substituting the expression for \( \beta_k^{CD} \), we obtain
\begin{align*}
g^k{}^\top d^k 
=& -\|g^k\|^2 - \frac{\|g^k\|^2}{g^{k-1}{}^\top d^{k-1}}\, g^k{}^\top d^{k-1} \\
=& -\|g^k\|^2 \left( 1 + \frac{g^k{}^\top d^{k-1}}{g^{k-1}{}^\top d^{k-1}} \right).
\end{align*}
To ensure the proof, it suffices to show that
\begin{equation*}\label{eq-CD}
1 + \frac{g^k{}^\top d^{k-1}}{g^{k-1}{}^\top d^{k-1}} > 0.
\end{equation*}
Using the Wolfe condition for $0 < c_2 < 1$, we have
$g^k{}^\top d^k \leq -r~\|g^k\|^2 $ where $r = 1+c_2>0$. Hence, \( d^k \) satisfies sufficient descent direction for the $CD$ type $\beta$.
\item[2.] Substituting the $DY$ formula for \( \beta \), we get
\begin{align*}
g^k{}^\top d^k =& -\|g^k\|^2 + \frac{\|g^k\|^2}{d^{k-1}{}^\top (g^k - g^{k-1})} g^k{}^\top d^{k-1}\\
=& -\|g^k\|^2 \left( 1-\frac{g^k{}^\top d^{k-1}}{d^{k-1}{}^\top (g^k - g^{k-1})}  \right).
\end{align*}
To ensure the sufficient descent condition, it is sufficient to show that
\[
1-\frac{g^k{}^\top d^{k-1}}{d^{k-1}{}^\top (g^k - g^{k-1})} >0.
\]
Now, using Wolfe conditions for $0<c_2<1$
\begin{align*}
  &1-\frac{g^k{}^\top d^{k-1}}{d^{k-1}{}^\top (g^k - g^{k-1})}\\
  &= 1- \frac{g^k{}^\top d^{k-1}}{g^k{^\top} d^{k-1} - g^{{k-1}^\top} d^{k-1}}\\
  &= 1 -\frac{c_2}{c_2-1}.
\end{align*}
 Thus, $g^k{}^\top d^k\leq -r \|g^k\|^2$ holds for $r= 1 -\frac{c_2}{c_2-1}>0$. So, $d^k$ satisfies a sufficient descent direction for $DY$ type $\beta$.
\end{itemize}
\end{proof}
Since the CFCG methods based on \( \beta^{\mathrm{PRP}} \) and \( \beta^{\mathrm{HS}} \) may cycle without converging to a solution, convergence can be ensured by using the modified formula\\ $\beta_k = \max\{0, \beta_k^{\mathrm{PRP}}\} \quad \text{and} \quad \beta_k = \max\{0, \beta_k^{\mathrm{HS}}\}$. To prove that the directions $d^k$ are sufficient descent, we first need the Lipschitz continuity about function $g$.

 We assume that the fractional gradient \( g(x) \) is Lipschitz continuous. That is, there exists a constant \( L > 0 \) such that for all \( x, y \in \mathbb{R}^n \), the following inequality holds \cite{Lipschitz},
\begin{equation}\label{Lipschitz}
\|g(x) - g(y)\| \leq L \|x - y\|.
\end{equation}
   
\begin{lemma}\label{theorem3}(For $PRP$ and $HS$ type $\beta$)
Let \( f : \mathbb{R}^n \to \mathbb{R} \) be a function that is absolutely continuous and satisfies the Lipschitz continuity condition. Suppose that the sequence \(\{x^k\}\) is generated by the nonlinear CG method with search direction is computed as $d^k = -g^k + \beta_k d^{k-1},$
where
\begin{itemize} 
   \item[1.] $\beta_k^{\text{PRP}} = \frac{g^k{}^\top (g^k - g^{k-1})}{\|g^{k-1}\|^2}$
 \item[2.] $\beta_k^{\text{HS}} = \frac{g^k{}^\top (g^k - g^{k-1})}{d^{k-1}{}^\top (g^k - g^{k-1})}$,
\end{itemize}
then \( d^k \) satisfies a sufficient descent direction \eqref{sufficient_dD}. 
\end{lemma}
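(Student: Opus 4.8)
The plan is to argue by induction on $k$, carrying along the sufficient descent bound $g^{k-1}{}^\top d^{k-1} \leq -r_1\|g^{k-1}\|^2$ with $r_1>0$ from the previous step (the base case $k=0$ being $d^0=-g^0$, for which the bound holds with $r_1=1$). Both methods use the truncated parameter $\beta_k=\max\{0,\beta_k^{\mathrm{PRP}}\}$ (respectively $\beta_k=\max\{0,\beta_k^{\mathrm{HS}}\}$), so $\beta_k\geq 0$ always. Starting from the identity
\[
g^k{}^\top d^k = -\|g^k\|^2 + \beta_k\, g^k{}^\top d^{k-1},
\]
I would first dispose of the trivial case $\beta_k=0$, where $d^k=-g^k$ and the claim holds with $r=1$. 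The substance lies in the case $\beta_k>0$.

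For the cross term $\beta_k\, g^k{}^\top d^{k-1}$ the crucial point is that the sign of $g^k{}^\top d^{k-1}$ is not fixed. When $g^k{}^\top d^{k-1}\leq 0$, nonnegativity of $\beta_k$ gives $\beta_k\, g^k{}^\top d^{k-1}\leq 0$ at once, hence $g^k{}^\top d^k\leq -\|g^k\|^2$. When $g^k{}^\top d^{k-1}>0$, I would bound $\beta_k$ from above using Cauchy--Schwarz together with the Lipschitz estimate \eqref{Lipschitz}, namely $\|g^k-g^{k-1}\|\leq L\|x^k-x^{k-1}\|=L\eta_{k-1}\|d^{k-1}\|$, so that the common numerator $g^k{}^\top(g^k-g^{k-1})$ of both $\beta_k^{\mathrm{PRP}}$ and $\beta_k^{\mathrm{HS}}$ is dominated by $L\eta_{k-1}\|g^k\|\,\|d^{k-1}\|$.

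The two cases then differ only in the denominator. For $\beta_k^{\mathrm{PRP}}$ the denominator $\|g^{k-1}\|^2$ is automatically positive, so the estimate on the numerator directly controls the cross term. For $\beta_k^{\mathrm{HS}}$ I would first show the denominator is bounded away from zero: rewriting $d^{k-1}{}^\top(g^k-g^{k-1})=g^k{}^\top d^{k-1}-g^{k-1}{}^\top d^{k-1}$ and invoking the Wolfe curvature condition \eqref{eq8} in the re-indexed form $g^k{}^\top d^{k-1}\geq c_2\, g^{k-1}{}^\top d^{k-1}$ gives
\[
d^{k-1}{}^\top(g^k-g^{k-1}) \geq (c_2-1)\,g^{k-1}{}^\top d^{k-1} \geq (1-c_2)\,r_1\,\|g^{k-1}\|^2 > 0,
\]
where the last step uses the inductive sufficient descent bound. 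Substituting these estimates back into the identity, the cross term is dominated by a multiple of $\|g^k\|^2$, and restricting to the step-length regime in which this multiple is strictly below one yields $g^k{}^\top d^k\leq -r\|g^k\|^2$ with a suitable $r\in(0,1)$, which closes the induction and establishes \eqref{sufficient_dD}.

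I expect the main obstacle to be precisely the subcase $g^k{}^\top d^{k-1}>0$, where the cross term genuinely competes with $-\|g^k\|^2$: obtaining a descent constant $r$ that is \emph{uniform} in $k$ requires the Lipschitz bound and the positive lower bound on the HS denominator to cooperate so that $\|d^{k-1}\|$ and $\eta_{k-1}$ remain controlled relative to $\|g^{k-1}\|$. This is exactly the point at which the truncation $\max\{0,\cdot\}$ and the curvature condition are indispensable, and it is why the plain (non-truncated) $\mathrm{PRP}$ and $\mathrm{HS}$ variants may fail to yield sufficient descent.
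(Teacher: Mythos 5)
Your case analysis on the sign of $g^k{}^\top d^{k-1}$, the use of the truncation $\beta_k=\max\{0,\cdot\}$ to dispose of the nonpositive case, and the positivity argument for the $HS$ denominator via the curvature condition $d^{k-1}{}^\top(g^k-g^{k-1})\ge (c_2-1)\,g^{k-1}{}^\top d^{k-1}>0$ all match ingredients of the paper's proof. But the decisive subcase $g^k{}^\top d^{k-1}>0$ is not actually closed in your argument, and you say so yourself. After Cauchy--Schwarz and the Lipschitz estimate, the cross term for $PRP$ is dominated by
\begin{equation*}
\beta_k\, g^k{}^\top d^{k-1} \;\le\; \frac{L\,\eta_{k-1}\,\|d^{k-1}\|^2}{\|g^{k-1}\|^2}\,\|g^k\|^2 ,
\end{equation*}
and nothing in the stated hypotheses (absolute continuity, Lipschitz continuity of $g$, the Armijo--Wolfe conditions) forces the coefficient $L\,\eta_{k-1}\,\|d^{k-1}\|^2/\|g^{k-1}\|^2$ to stay uniformly below $1$: the curvature condition \eqref{eq8} bounds $g^k{}^\top d^{k-1}$ from \emph{below}, not above, and there is no a priori bound on $\|d^{k-1}\|$ or $\eta_{k-1}$ in terms of $\|g^{k-1}\|$. ``Restricting to the step-length regime in which this multiple is strictly below one'' is therefore an additional hypothesis you are importing, not a conclusion, so the induction does not close and \eqref{sufficient_dD} is not established.

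For comparison, the paper takes a different quantitative route: it asserts $|g^k{}^\top(g^k-g^{k-1})|\le(1-c_2)\|g^k\|^2$ directly from the Wolfe condition, derives $\|g^{k-1}\|\le M\|g^k\|$ from the Lipschitz bound via the chain $\|g^{k-1}\|\le\|g^k\|+L\eta_{k-1}m\|g^{k-1}\|$, and then concludes $|\beta_k|\le 1$. Note that this chain itself presupposes $\|d^{k-1}\|\le m\|g^{k-1}\|$ together with $L\eta_{k-1}m<1$ --- which is essentially the same control on $\eta_{k-1}\|d^{k-1}\|/\|g^{k-1}\|$ that you identified as the obstruction. So you have correctly located the genuine difficulty (it is the standard reason why plain or even truncated $PRP$/$HS$ do not automatically yield sufficient descent under Wolfe line searches), but to produce a proof you would need to either add the implicit boundedness assumption explicitly or modify the line search so that sufficient descent is enforced directly.
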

\begin{proof}
From Wolfe condition, We have
\begin{align*}
    |g^k{}^\top(g^k - g^{k-1})| &\leq (1-c_2) \|g(x^k)\|^2.
\end{align*}
Again from Lipschitz condition for constant $M>0$, we have by inductive way
\begin{align*}
    \|g^{k-1}\|&\leq \|g^k\| +\|g^{k-1} -g^k\|\\
    & \leq\|g^k\| + L \eta_{k-1} \|d^{k-1}\|\\
    & \leq\|g^k\| + L \eta_{k-1} m\|g^{k-1}\|\\
    &\leq M\|g^k\|.
\end{align*}
\begin{itemize}
    \item[1.] For $PRP$ type $\beta$, we have $\beta_k^{PRP}= \max\{0,\beta_k^{PRP}\}$
    \begin{align*}
        |\beta_k^{PRP}|\leq \frac{g^k{}^\top (g^k - g^{k-1})}{\|g^{k-1}\|^2}\leq (1-c_2)\frac{\|g^k\|^2}{\|g^{k-1}\|^2}\leq \frac{1-c_2}{M^2}\leq 1.
    \end{align*}
Now, for $0\leq \beta_k^{PRP}\leq 1$
\begin{align*}
    g^k{}^\top d^k &\leq -\|g^k\|^2 +  g^k{}^\top d^{k-1}\\
    &\leq -\|g^k\|^2 + c_2\|g^{k-1}\|^2 \\
    &\leq -(1+ c_2M^2)\|g^k\|^2.
\end{align*}
Thus, $d^k$ satisfies sufficient descent direction, $g^k{}^\top d^k\leq -r\|g^k\|^2$ for\\ $r=(1+ c_2 M^2)>0$.
\item[2.] Now consider the $HS$ method for $0<c_2<1$
\begin{align*}
    &d^{k-1}{}^\top (g^k - g^{k-1})\geq (c_2-1) g^{k-1}{}^\top d^{k-1}.
\end{align*} 
Since, $g^{k-1}{}^\top d^{k-1}<0$ then $d^{k-1}{}^\top (g^k - g^{k-1})>0$ thus $\beta_k^{HS}$ is well defined for Caputo fractional derivative.
\begin{align*}
      g^k{}^\top d^k &\leq -\|g^k\|^2 + \beta_k^{HS} g^k{}^\top d^{k-1}\\
    &\leq -\|g^k\|^2 + \frac{(1-c_2)\|g^k\|^2}{(c_2-1)g^{{k-1}^\top} d^{k-1}} c_2 g^{{k-1}^\top} d^{k-1}\\
    &\leq -(1+ c_2) \|g^k\|^2.
\end{align*}
So $g^k{}^\top d^k\leq - r\|g^k\|^2$ hold for $r= 1+ c_2.$ Thus $\beta_k^{HS}$ satisfies sufficient descent direction.
\end{itemize}
\end{proof}
So far, it has been established that the directions \(d^{k}\) defined in \eqref{eq:d} are sufficient descent directions for \(f\) at \(x^{k}\) and that the associated step size satisfies the Armijo–Wolfe conditions. Similarly, for each sufficient descent direction \(d^{k}\), Lemma~\ref{lemma2} ensures the existence of a step size \(\eta_k>0\). Next section contains proposed algorithm  convergence results.

\section{Algorithm and convergence results.}\label{sec3}
This section presents the proposed algorithm along with its convergence analysis. In the following table, we propose the CFCG algorithm for problem $(P)$.\\
 \noindent\rule{\linewidth}{0.4pt} \begin{algorithm}\label{algorithm 1} \textbf{(CFCG method):}
\begin{itemize}
    \item[Step 1.]  {\it Initialization:} Supply $f$, initial point $x^0 \in \mathbb{R}^n$, scalars $\sigma,r \in (0, 1), c_1,c_2\\(0<c_1<c_2<1)$,  and tolerance $\epsilon > 0$. Set  $k = 0$.
    \item[Step 2.]  {\it Optimality check:} Compute $g^k$. If $\|g^k\| < \epsilon$, then stop and declare $x^k$ as an approximate stationary point. Otherwise, go to the next step.
    \item[Step 3.] {\it Descent direction:}  compute descent direction $d^k$ using (\ref{eq:d}).    \label{ste2} 
     \item[Step 4.] {\it Line search:} Select $\eta_k$ as the first element in the sequence $\{1, r, r^2, \ldots\}$ that satisfies Armijo-Wolfe conditions \eqref{eq8}.\label{line_search}
       \item[Step 5.] {\it Update:} Compute $x^{k+1} = x^k + \eta_k d^k$. Update $k \gets k + 1$ and go to Step \hypertarget{step2}{2}.
\end{itemize}
\end{algorithm}
\noindent\rule{\linewidth}{0.4pt}  
Now, this information is ready to present convergence analysis under some assumptions for the CFCG method. For all five choices of $\beta$, the denominator norm is positive. Therefore, the convergence proof is identical. The statement of convergence analysis is stated below.

\begin{theorem}\label{convergence thm}
Uppose $f:\mathbb{R}^n\to\mathbb{R}$ be absolutely continuous on an open set $\mathcal{N}$ containing the level set
$
\mathcal{L}:=\{x\in\mathbb{R}^n:\ f(x)\le f(x^0)\},$ where $x^0$ is the initial point. Further suppose that $\mathcal{L}$ is bounded, $g$ is Lipschitz continuous on $\mathcal{N}$, and $\cos^2\theta_k \ge \delta>0$ holds for all $k$ where $\theta_k$ is the angle between $d^k$ and $g^k$. Then any accumulation point of $\{x^k\}$, generated by Algorithm~\ref{algorithm 1}, is a stationary point of $(P)$.
\end{theorem}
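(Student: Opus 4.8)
The plan is to follow the classical Zoutendijk argument, adapted to the Caputo fractional gradient $g$. First I would observe that every iterate remains in the level set: the Armijo condition \eqref{eq9} together with $g^k{}^\top d^k<0$ forces $f(x^{k+1})\le f(x^k)$, so by induction $\{x^k\}\subset\mathcal{L}$. Since $f$ is absolutely continuous (hence continuous), $\mathcal{L}=f^{-1}\big((-\infty,f(x^0)]\big)$ is closed, and by hypothesis bounded, therefore compact; thus $f$ is bounded below on $\mathcal{L}$. The scalar sequence $\{f(x^k)\}$ is then monotone nonincreasing and bounded below, hence convergent, and Bolzano--Weierstrass guarantees that $\{x^k\}$ admits an accumulation point $x^*\in\mathcal{L}$.

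Next I would derive the Zoutendijk inequality by combining the curvature condition with the Lipschitz bound to obtain a lower bound on the step length. Subtracting $g^k{}^\top d^k$ from both sides of the Wolfe condition \eqref{eq8} gives
\begin{equation*}
(g^{k+1}-g^k)^\top d^k \ge (c_2-1)\,g^k{}^\top d^k,
\end{equation*}
whose right-hand side is positive because $c_2<1$ and $g^k{}^\top d^k<0$. On the other hand, the Lipschitz continuity \eqref{Lipschitz} of $g$ yields $(g^{k+1}-g^k)^\top d^k\le L\eta_k\|d^k\|^2$, so that
\begin{equation*}
\eta_k \ge \frac{(1-c_2)\,\big(-g^k{}^\top d^k\big)}{L\,\|d^k\|^2}.
\end{equation*}
Substituting this lower bound into the sufficient-decrease condition \eqref{eq9} produces the per-step estimate
\begin{equation*}
f(x^k)-f(x^{k+1}) \ge \frac{c_1(1-c_2)}{L}\,\frac{\big(g^k{}^\top d^k\big)^2}{\|d^k\|^2}.
\end{equation*}
Summing over $k$ and using that $\{f(x^k)\}$ converges, the telescoping sum is finite, giving the Zoutendijk condition $\sum_{k\ge 0}\big(g^k{}^\top d^k\big)^2/\|d^k\|^2<\infty$.

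Finally I would convert this into vanishing of the gradient. Writing $\cos^2\theta_k=\big(g^k{}^\top d^k\big)^2/\big(\|g^k\|^2\|d^k\|^2\big)$ and invoking the hypothesis $\cos^2\theta_k\ge\delta>0$, each summand satisfies $\big(g^k{}^\top d^k\big)^2/\|d^k\|^2=\|g^k\|^2\cos^2\theta_k\ge\delta\|g^k\|^2$. Convergence of the series then forces $\|g^k\|\to 0$. Passing to a subsequence $x^{k_j}\to x^*$ and using continuity of $g$ (a consequence of its Lipschitz property), I obtain $g(x^*)=\lim_j g(x^{k_j})=0$. The remaining step, which I expect to be the main obstacle, is to identify the vanishing of the Caputo fractional gradient with genuine stationarity: by the construction in Lemma~\ref{thm1}, $g$ is obtained from the integer-order gradient through the invertible diagonal scaling $\operatorname{diag}\big({}^C_c\nabla^{\alpha}_x I(x)\big)^{-1}$ and agrees with $\nabla f$ at the reference point, so $g(x^*)=0$ is equivalent to $\nabla f(x^*)=0$. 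Hence $x^*$ is a stationary point of $(P)$, completing the argument.
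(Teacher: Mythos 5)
Your proposal is correct and follows essentially the same route as the paper: the Zoutendijk argument obtained by combining the Wolfe curvature condition with the Lipschitz bound on $g$ to lower-bound $\eta_k$, substituting into the Armijo condition, telescoping, and then using $\cos^2\theta_k\ge\delta$ to conclude $\|g^k\|\to 0$ and hence $g(x^*)=0$ at every accumulation point. Your closing remark on identifying the vanishing of the Caputo fractional gradient with stationarity via the invertible diagonal scaling of Lemma~\ref{thm1} is a useful clarification that the paper leaves implicit, but it does not change the argument.
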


\begin{proof} From Wolfe condition \eqref{eq8}, we have
\begin{align}\label{one;}
    (c_2-1)g^{k^\top}d^k\leq (g^{k+1}- g^k)^\top d^k.
    \end{align}
From \eqref{Lipschitz} for there exists $L>0$ such that 
\begin{align}\label{two}
\big(g^{k+1}- g^k\big)^\top d^k\leq L~ \eta_k\|d^k\|^2.
\end{align}
Combining \eqref{one;} and \eqref{two}, we obtain
\begin{equation*}
    (c_2 -1) g^k{^\top}d^k\leq L~ \eta_k\|d^k\|^2
\end{equation*}
Since $\|d^k\|\neq 0$ and ${g^k}^Td^k<0$
\begin{align*}
   \eta_k {g^k}^Td^k \leq\frac{c_2-1}{L} \frac{(g^k{^{\top}} d^k)^2}{\|d^k\|^2}.
\end{align*}
Then by Armijo condition \eqref{eq9},
\begin{align}
f(x^{k+1})\leq f(x^k) + \frac{c_1(c_2-1)}{L} \frac{(g^k{^\top} d^k)^2}{\|d^k\|^2}.\label{amj2}
\end{align}
This implies
\begin{align}\label{three}
f(x^0)-f(x^{k+1}) \geq \frac{c_1(1-c_2)}{L}\sum_{i=0}^{k} \frac{\big(g^i{}^{\top} d^i\big)^2}{\|d^i\|^2}=\frac{c_1(1-c_2)}{L}\sum_{i=0}^{k}\|g^i\|^2\cos^2\theta_i.
\end{align}
Since $x^k\in \mathcal{L}$ for all $k$, $\{x^k\}$ is a bounded sequence. Hence $\{f(x^k)\}$ is bounded. Then there exists $f^*$ such that $f(x^k)\geq f^*>-\infty$ for all $k$. Considering limit $k\rightarrow \infty$ in (\ref{three}), we have
$$ \frac{c_1(1-c_2)}{L}\sum_{i=0}^{\infty}\|g^i\|^2\cos^2\theta_i \leq f(x^0)-f^*<\infty.$$
This implies $\underset{k\rightarrow \infty}{\lim}~\|g^k\|^2\cos^2\theta_k=0 $. Since $\cos\theta_k\ge \delta>0$ holds for all $k$,  $\underset{k\rightarrow \infty}{\lim}~\|g^k\|=0 $.
Since $\{x^k\}$ is bounded, $\{x^k\}$ has at least one convergence sequence. Suppose $\{x^k\}_{k\in K_1}$ be any arbitrary convergent subsequence converging to $x^*$. Then $\{g^k\}_{k\in K_1}$ is a converging subsequence of $\{g^k\}$ which converges to $g(x^*).$ Since $\underset{k\rightarrow \infty}{\lim}~g^k=0$, $g(x^*)=0$. This implies $x^*$ is a stationary point of $(P)$. Since  $\{x^k\}_{k\in K_1}$ be any arbitrary convergent subsequence of $\{x^k\}$, any accumulation point of $\{x^k\}$ is a stationary point of $(P)$.
\end{proof}
The next theorem confirms the rate of convergence for strongly convex functions. 

\begin{theorem}\label{theorem8}
Suppose assumptions of Theorem \ref{convergence thm} are hold, and $\{x^{k}\}$ is a sequence generated by the algorithm (\ref{algorithm 1}) that converges to $x^*$. Further suppose there exists $0<m\le M$ such that
 \begin{equation}\label{one}
 m\|y\|^2 \le y^\top H(x)\,y \le M\|y\|^2
 \end{equation}
 holds for every $x,y\in\mathbb{R}^n$. Then $\{x^k\}$ converges at least linearly.
\end{theorem}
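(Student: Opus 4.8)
The plan is to promote the summable-decrease estimate from the proof of Theorem~\ref{convergence thm} into a geometric contraction on the optimality gap $f(x^{k})-f^{*}$, where $f^{*}=f(x^{*})$, and then to transfer that contraction to the iterates via the quadratic lower bound furnished by strong convexity. First I would reuse inequality~\eqref{amj2}; combining it with the uniform angle condition $\cos^{2}\theta_{k}\ge\delta$ and the identity $(g^{k}{}^{\top}d^{k})^{2}/\|d^{k}\|^{2}=\|g^{k}\|^{2}\cos^{2}\theta_{k}$ already used in \eqref{three}, one obtains the one-step decrease
\begin{equation*}
f(x^{k+1})-f^{*}\le\big(f(x^{k})-f^{*}\big)-\frac{c_{1}(1-c_{2})\delta}{L}\,\|g^{k}\|^{2}.
\end{equation*}

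The decisive ingredient is a gradient-dominance (Polyak--{\L}ojasiewicz) inequality $\|g^{k}\|^{2}\ge\kappa\,(f(x^{k})-f^{*})$ with $\kappa>0$, which I would extract from the lower curvature bound in \eqref{one}. Using the fractional Taylor expansion of Lemma~\ref{thm1}, the effective quadratic model of $f$ about $x^{k}$ carries Hessian $C_{\alpha,\rho}H(\xi)$, so \eqref{one} upgrades to $C_{\alpha,\rho}m\|y\|^{2}\le y^{\top}\big(C_{\alpha,\rho}H(\xi)\big)y\le C_{\alpha,\rho}M\|y\|^{2}$. Minimising the associated quadratic lower model over $\mathbb{R}^{n}$ then yields $f(x^{k})-f^{*}\le\frac{1}{2C_{\alpha,\rho}m}\|g^{k}\|^{2}$, i.e. $\kappa=2C_{\alpha,\rho}m$.

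Substituting this dominance bound into the one-step decrease gives the contraction
\begin{equation*}
f(x^{k+1})-f^{*}\le\Big(1-\frac{2c_{1}(1-c_{2})\delta\,C_{\alpha,\rho}m}{L}\Big)\big(f(x^{k})-f^{*}\big)=:\rho\,\big(f(x^{k})-f^{*}\big),
\end{equation*}
and I would verify $0\le\rho<1$: positivity of every factor forces $\rho<1$, while $\rho\ge0$ follows from $m\le M\le L$ together with $c_{1},\,1-c_{2},\,\delta\le1$. Iterating gives $f(x^{k})-f^{*}\le\rho^{k}(f(x^{0})-f^{*})$, linear convergence in value. Passing to the iterates through the left-hand bound in \eqref{one}, namely $\frac{m}{2}\|x^{k}-x^{*}\|^{2}\le f(x^{k})-f^{*}$, finally produces $\|x^{k}-x^{*}\|\le\sqrt{2(f(x^{0})-f^{*})/m}\,\rho^{k/2}$, i.e. at least linear convergence of $\{x^{k}\}$ with rate $\sqrt{\rho}$.

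The hard part will be the second step, since the Polyak--{\L}ojasiewicz inequality must be phrased for the Caputo fractional gradient $g^{k}$ rather than the ordinary gradient, whereas the curvature hypothesis~\eqref{one} is stated through the integer Hessian $H$. Propagating \eqref{one} through the construction in Lemma~\ref{thm1} requires controlling the correction term built from ${}^{C}_{c}\nabla^{1+\alpha}_{x}f_{\alpha,\rho}$, so as to guarantee that $C_{\alpha,\rho}m$ is a genuine lower curvature constant for the effective model uniformly in $k$; everything downstream---both the contraction factor and the final rate---hinges on this constant.
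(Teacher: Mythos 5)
Your proposal is correct and follows essentially the same route as the paper: a per-iteration decrease proportional to $\|g^k\|^2$ obtained from \eqref{amj2} and the angle condition, a strong-convexity bound converting $\|g^k\|^2$ into a multiple of $f(x^k)-f(x^*)$, geometric decay of the optimality gap, and transfer to $\|x^k-x^*\|$ via the quadratic growth bound \eqref{eq31}. The only cosmetic difference is that the paper reaches the gradient-dominance step by passing through $\|x^k-x^*\|$ (using $g(x)=H(\xi)(x-x^*)$ together with \eqref{eq31}, which yields the constant $2m^2/(M C_{\alpha,\rho})$ rather than your $2C_{\alpha,\rho}m$), whereas you invoke the Polyak--{\L}ojasiewicz inequality directly; both arguments rest on the same identification of $g$ with the gradient of the effective quadratic model, the point you yourself flag as the delicate step.
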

\begin{proof}
 Since $(g^k)^{\top} d^k<0$ and $\cos^2\theta_k\geq \delta$ hold for every $k$,
\[
\frac{-\, (g^k)^{\top} d^k}{\|g^k\|\,\|d^k\|}=|\cos\theta_k| \geq \sqrt{\delta}.
\]

From \eqref{amj2},
\begin{equation}\label{eq32}
f(x^k)-f(x^{k+1})
\;\ge\;
\rho_1\,\frac{\big((g^k)^{\top} d^k\big)^2}{\|d^k\|^2}=\rho_1\|g^k\|^2\left(\frac{-\, (g^k)^{\top} d^k}{\|g^k\|\,\|d^k\|}\right)^2 \geq \rho_1 \delta \|g^k\|^2.
\end{equation}
where $\rho_1=\frac{c_1(1-c_2)}{L}>0$. Since $x^*$ is a stationary point, we have $g(x^*)=0$. Using \eqref{Taylor} and \eqref{one}, we obtain

\begin{equation}\label{eq31}
\frac{m}{2}C_{\alpha,\rho}\,\|x-x^*\|^2 \le f(x)-f(x^*) \le \frac{M}{2}C_{\alpha,\rho}\,\|x-x^*\|^2 .
\end{equation}

By the mean value theorem,
\[
g(x)-g(x^*)=H(\xi)\,(x-x^*)
\]
for some $\xi$ on the line segment joining $x$ and $x^*$. Since $g(x^*)=0$, 
\[
g(x)=H(\xi)\,(x-x^*).
\]
Hence, using the bounds $mI \leq H(\xi)\leq MI$, we obtain
\begin{equation}\label{eq30}
m\,\|x-x^*\|\le \|g(x)\|\le M\,\|x-x^*\|.
\end{equation}
Now using \eqref{eq30} in \eqref{eq32} we have,
\[
f(x^k)-f(x^{k+1})
\;\ge\;
\delta~\rho_1\,m^2\,\|x^k-x^*\|^2.
\]
Finally from \eqref{eq31}, 
\begin{equation}\label{equation}
f(x^k)-f(x^{k+1})
\;\ge\;
\frac{2\delta\rho_1 m^2}{M~C_{\alpha,\rho}}\,\big(f(x^k)-f(x^*)\big).
\end{equation}

Let
$
\bar\rho:=\frac{2\delta\rho_1 m^2}{M~C_{\alpha,\rho}}.$
Then \eqref{equation} implies
\begin{eqnarray*}
f(x^{k+1})-f(x^*)&\le& (1-\bar\rho)\,\big(f(x^k)-f(x^*)\big),\\
& \le&(1-\bar\rho)^2\,\big(f(x^{k-1})-f(x^*)\big) \\
& \le& \dots\\
&\le& (1-\bar\rho)^k\,(f(x^0)-f(x^*))
\end{eqnarray*}
Using \eqref{eq31}, we obtain
\[
\|x^k-x^*\|^2 \le \frac{2}{m}\,\big(f(x^k)-f(x^*)\big)
\le \frac{2}{m}\,(1-\bar\rho)^k\,(f(x^0)-f(x^*)).
\]
Hence,
\[
\|x^k-x^*\|
\le (1-\bar\rho)^{k/2}\sqrt{\frac{2\,(f(x^0)-f(x^*))}{m}},
\]
which shows that $\{x^k\}$ converges to $x^*$ at least linearly.
\end{proof}

\textbf{Convergence analysis for the Tikhonov regularized least squares problem:}
Next focus is to illustrate the convergence property of the proposed fractional algorithm, considering the quadratic objective problem is of the form,
\begin{equation}\label{eq20}
\min_x\quad f(x)= \frac{1}{2}x^\top~ A x+ b^\top~ x + c,
\end{equation}
where $b,x\in\mathbb{R}^n, A =(a_{i,j})\in\mathbb{R}^{n\times n}$ and $c\in\mathbb{R}$. Here, it assumes the matrix $A$ is a positive symmetric definite matrix. The unique minimizer is $x^*=-A^{-1}b.$
The least square problem of \eqref{eq20} is written as 
\begin{equation}\label{eq21}
    \min_x\quad f(x)= \frac{1}{2}\|X^\top~ x-y\|^2,
\end{equation}
where $X=(\bar{x_1}, \bar{x_2},...,\bar{x_n})\in\mathbb{R}^{n\times m}, \bar{x_k}= (x_{k1},x_{k2},...,x_{kn})^\top, ~y=(y_1,y_2,...,y_n)^\top.$\\
 The solution of least square problem \eqref{eq21} is  $x^*= (XX^\top)^{-1}Xy.$ This is hold for\\ $A= XX^\top$ and $b=-Xy$. 

 For quadratic functions, fractional gradient is defined as \begin{align*}
    &~^C_{c} \nabla_{x}^{\alpha} f(x^k) = A x^k + b+ \gamma_{\alpha,\rho} \bar{R}(x^k-c^k),\label{eq-gradient}\\
    ~&\gamma_{\alpha,\rho}= \rho-\frac{1-\alpha}{2-\alpha},~\bar{R}= diag(\sqrt{a_{11}},...,\sqrt{a_{nn}})^\top, \text{and}~ c^k=(0,0,...,0)^\top. \notag
 \end{align*}
To handle some ill-posed matrix and fast convergence, the Tikhonov regularization is employed here. It is also helpful for fast convergence. Equation \eqref{eq21} can be written as Tikhonov regularization problem $(T)$ for any point $\bar{x}$ is 
\begin{equation*}
  (T):\quad  \min_x \|X^\top x-y\|^2 + \gamma \|\bar{R}^\top (x-\bar{x})\|^2,
\end{equation*}
 Here $\bar{R}$ is the Tikhonov matrix, $\bar{x}$ is any point, and $\gamma$ is the Tikhonov parameter for this article. Solution of Tikhonov regularization of problem $(T)$ is as follows
 \begin{equation}\label{eq23}
     x^*_T(\gamma)= \bar{x}+ (XX^\top + \gamma \bar{R}\bar{R}^\top)^{-1} X (y-X^\top \bar{x}),
 \end{equation}
 under mild assumption is matrix $(XX^\top + \gamma \bar{R}\bar{R}^\top)^{-1}$ is invertible.\\
The following theorem establishes convergence results for the Tikhonov regularization applied to a least-squares problem.
\begin{theorem}\label{conv_trik}
Let $f$ be the quadratic function defined in \eqref{eq20}. For $\alpha\in(0,1)$ and\\$\rho\in\mathbb{R}$, define
$
\bar{A}=XX^\top+\gamma_{\alpha,\rho}\bar{R},$
and assume that $\bar{A}$ is positive definite. Any sequence 
$\{x^k\}$ generated by Algorithm~\ref{algorithm 1} for problem $(T)$ converges to the Tikhonov regularization solution \eqref{eq23}.
\end{theorem}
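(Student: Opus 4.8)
The plan is to exploit the fact that problem $(T)$ is itself a strongly convex quadratic, so that the fractional conjugate gradient iteration possesses a \emph{unique} stationary point which can be computed in closed form and then identified with the Tikhonov solution \eqref{eq23}; convergence of $\{x^k\}$ to that point will then follow directly from the machinery already established in Theorems \ref{convergence thm} and \ref{theorem8}. In other words, I treat the theorem as the assertion that the fractional gradient automatically supplies the regularizing term, so that minimizing the least-squares objective with the CFCG method is equivalent to solving the regularized problem.

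First I would write down the fractional gradient of the objective. Since $(T)$ is a quadratic of the form \eqref{eq20} with system matrix $\bar{A}=XX^\top+\gamma_{\alpha,\rho}\bar{R}$ and linear term $b=-Xy$, the quadratic fractional-gradient formula (taking $c^k=0$ as prescribed) collapses to the affine map
\[
g^k={}^{C}_{c}\nabla_{x}^{\alpha} f(x^k)=\bar{A}\,x^k+b .
\]
Because $\bar{A}$ is assumed positive definite, the stationarity equation $g=0$ has the single solution $x^{\star}=-\bar{A}^{-1}b=\bar{A}^{-1}Xy$.

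The main step, and the place where I expect the genuine work to sit, is verifying that this zero of the fractional gradient coincides with the Tikhonov solution \eqref{eq23}. I would substitute $A=XX^\top$, $b=-Xy$ and $\bar{x}=c^k=0$ into \eqref{eq23}, regroup the resulting expression as $\bar{A}^{-1}\big(Xy+\gamma_{\alpha,\rho}\bar{R}\bar{R}^\top\bar{x}\big)$, and check it term-by-term against $\bar{A}^{-1}Xy$. The delicate bookkeeping is matching the regularization factor appearing in the gradient, namely $\gamma_{\alpha,\rho}\bar{R}$, with the penalty term of $(T)$, namely $\gamma\,\bar{R}\bar{R}^\top$; one must track precisely how $\gamma_{\alpha,\rho}=\rho-\frac{1-\alpha}{2-\alpha}$ plays the role of the Tikhonov parameter $\gamma$ and how the diagonal structure of $\bar{R}$ makes the two penalty matrices consistent. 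This identification is the true content of the statement, so I would state it carefully rather than glossing over it.

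Finally I would close the argument by invoking the already-proved convergence results. Positive definiteness of $\bar{A}$ supplies the two-sided Hessian bound \eqref{one} with $m=\lambda_{\min}(\bar{A})>0$ and $M=\lambda_{\max}(\bar{A})$, so the objective is strongly convex with bounded level sets; the gradient $g(x)=\bar{A}x+b$ is globally Lipschitz with constant $L=\|\bar{A}\|$, satisfying \eqref{Lipschitz}; and the sufficient-descent Lemmas \ref{theorem1}--\ref{theorem3} together with the angle condition give $\cos^2\theta_k\ge\delta>0$. Hence every hypothesis of Theorems \ref{convergence thm} and \ref{theorem8} is met. Theorem \ref{convergence thm} forces any accumulation point of $\{x^k\}$ to be stationary, and by strong convexity this stationary point is unique and equal to $x^{\star}$, while Theorem \ref{theorem8} upgrades convergence of the whole sequence to at-least-linear rate. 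Combining this with the identification of the previous paragraph yields $x^k\to x^{\star}_T(\gamma)$, which is exactly the claim.
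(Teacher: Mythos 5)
Your proposal follows essentially the same route as the paper: both compute the Caputo fractional gradient of the quadratic in closed form, observe that it can be written as $\bar{A}\,(x^k - x^*_T(\gamma))$ so that its unique zero is the Tikhonov solution \eqref{eq23}, and then conclude convergence of $\{x^k\}$ by falling back on the machinery of Theorem \ref{convergence thm} (the paper additionally extracts a Wolfe-based lower bound on $\eta_k$ before doing so, while you more explicitly verify the hypotheses of Theorems \ref{convergence thm} and \ref{theorem8} via $m=\lambda_{\min}(\bar{A})$ and $L=\|\bar{A}\|$). The one step you flag but leave unexecuted --- reconciling the penalty $\gamma_{\alpha,\rho}\bar{R}$ produced by the fractional gradient with the $\gamma\,\bar{R}\bar{R}^\top$ appearing in \eqref{eq23} --- is precisely the algebraic crux, and the paper resolves it only implicitly by working with $\bar{A}=XX^\top+\gamma_{\alpha,\rho}\bar{R}$ throughout, so your plan is consistent with the published argument.
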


\begin{proof}
Let $c, x\in\mathbb{R}^n$. Let $M = diag\Big([^C_{c_1} D_{x_1}^{\alpha} x_1 \quad \cdots \quad ^C_{c_d} D_{x_d}^{\alpha} x_d ]\Big) \in \mathbb{R}^{n \times n}.$ For $0 < \alpha < 1,~ x^*\in \mathbb{R}^n ~$and $y= X^\top x^*$ it can be checked 
$$
{}^{C}_{c}\nabla^{\alpha}_{x} f_{\alpha,\rho}(x) = M  \Big[X (X^\top x -   {y} )+  \gamma_{\alpha} \,  \bar{R}(x - c) \Big].$$ Similarly, ${}^{C}_{c}\nabla^{\alpha+1}_{x} f_{\alpha,\rho}(x) = M   \Big[ \mathrm{sign}(x - c) \bar{R} \Big]\text{(see Theorem 3.2, \cite{27})}.$\\
Therefore,
\begin{align*}
& {}^{C}_{c}\nabla^{\alpha}_{x} f_{\alpha,\rho}(x) + \rho \, (|x-c|)   {}^{C}_{c}\nabla^{\alpha+1}_{x} f_{\alpha,\rho}(x)=M \Big[ X(X^\top x -   {
y}) + (\rho + \gamma_{\alpha}) \, \bar{R}(x - c) \Big],
\end{align*}
where $\gamma_{\alpha,\rho}= \rho + \gamma_\alpha $ then $d^k$ for $k=0$ is defined as follows

\begin{align}\label{eq24}
&g^k =  X X^\top \Big[ (I+\gamma_{\alpha, \rho}~(XX^\top)^{-1}\bar{R} )x^k -(x^* + \gamma_{\alpha, \rho} ~(XX^\top)^{-1}\bar{R}  ~c)\Big]
\end{align}
 it is obtained from $g^k=0,~x^k =  (I +  \gamma_{\alpha ,\rho}~(XX^\top)^{-1}\bar{R}) ^{-1} 
( x^* + \gamma_{\alpha ,\rho}~c (XX^\top)^{-1}\bar{R} )$.\\
 Observe that,
\begin{align*}
x^k 
&= c + \left(I + \gamma_{\alpha,\beta}(XX^\top)^{-1} \bar{R}\right)^{-1}(x^* - c) \\
&= c + \left(XX^\top + \gamma_{\alpha,\rho} \bar{R}\right)^{-1} XX^\top (x^* - c) \\
&= c + \left(XX^\top + \gamma_{\alpha,\rho} \bar{R}\right)^{-1} X(y - X^\top c) \\
&= x^*_T(\gamma).
\end{align*}
It can be checked from \eqref{eq24},
\begin{align*}
  g^k =  ( X X^\top + \gamma_{\alpha,\rho} \, \bar{R} )(x^k - x^*_T(\gamma)),~~\text {for} ~~ k=0.
\end{align*}
Let $e^k=x^k-x_T^*(\gamma)$ and define the positive definite matrix
$\bar{A}=(XX^\top+\gamma_{\alpha,\rho}\,\bar{R})$. Then $g^k=\bar{A}e^k$.
Moreover,
\begin{align*}
{}^{C}_{c}\nabla_{x}^{\alpha} f(x^{k+1})
&=\bar{A}e^{k+1}\\
&=\bar{A}\big(x^k+\eta_k d^k-x_T^*(\gamma)\big)\\
&=\bar{A}\big(e^k+\eta_k d^k\big).
\end{align*}
By the Wolfe condition,
\begin{align*}
\Big(\bar{A}(e^k+\eta_k d^k)\Big)^\top d^k
\ge c_2\,(\bar{A}e^k)^\top d^k,
\end{align*}
and hence
\begin{align*}
\eta_k \ge \frac{(1-c_2)\big(-\,e^{k\top}\bar{A}d^k\big)}{d^{k\top}\bar{A}d^k}\geq 0.\label{eta_k}
\end{align*}
Remaining is same as Theorem \eqref{convergence thm} that sequence $\{x^k\}$ generated for Problem~$T$ converges to the Tikhonov solution $x_T^*(\gamma)$.

\end{proof}

\section{Numerical example.}\label{sec4}
Numerical examples have been presented to validate the theoretical results and to demonstrate the effectiveness of the proposed methods. The article includes three distinct examples for comparison and validation. The first two are adapted from \cite{27}.

In Example~\ref{example1}, the convergence behaviour of the proposed CFCG method has been compared with Caputo fractional steepest descent (CFSD) method in the context of Tikhonov regularization based on parameters. This example demonstrates the numerical stability and efficiency of the proposed approach. Example~\ref{example2} focuses on function-based learning using neural networks. The performance of both CFSD and CFCG methods has been evaluated on three benchmark functions. Mean of iterations numbers and function evaluation has been reported in a comparative table to highlight the optimization quality. In Example~\ref{example3}, a neural network classifier has been tested on two real-world datasets: Fashion MNIST and KMNIST. This example provides a comprehensive comparison between stochastic sub-gradient descent (SSD) and the proposed CFCG method. Mini-batch suits MNIST and KMIST by delivering low-cost, frequent updates from small random batches, achieving faster time-to-accuracy and lower memory use than full-batch GD. Its inherent stochasticity provides implicit regularization—escaping saddles/sharp minima and favouring flatter minima—thus improving generalization on handwritten-digit classification. As ReLU activation functions, commonly used in classification networks, are non-differentiable at zero, the overall objective becomes non-smooth. Consequently, we work with sub-differential method. Tables, graphs, and visual predictions have been included to support the evaluation. Together, these examples demonstrate the improved convergence, stability, and generalization performance achieved by the CFCG method.

\begin{example}\label{example1}\textbf{Random datasets for Tikhonov parameters:} 

To explore the convergence behaviour of the proposed \text{CFCG} method in comparison with the \text{CFSD} approach for solving Tikhonov regularization problems, we consider quadratic objective functions of the form~\eqref{eq20}.
The data matrix $X \in \mathbb{R}^{m \times n}$ and vector $y \in \mathbb{R}^{m \times 1}$ are generated with entries randomly drawn from the interval $(-1, 1)$.
For this study, we fix the dimensions as $m = n = 100$. From this setup, the matrix $A$ and vector $b$ are defined as $A = X X^\top$ and $b = -A y$, respectively, establishing the optimization landscape.

The optimization is initialized with a vector $x$, where each component is independently sampled from a uniform distribution over $(1, 10)$, ensuring a randomized and diverse starting point.
The vector $c$, representing the lower bound of integration in the Caputo derivative, is set uniformly to one, reflecting the non-local memory characteristic of the fractional formulation.

To improve the conditioning of the least-squares problem, \text{Tikhonov regularization} is applied.
The regularization parameter $\gamma$ is varied within the set ${0.5, 0.75, 1, 2, 3, 4}$.
For each $\gamma$, we track the Euclidean distance to the regularized solution as a function of the iteration count.
As expected from the theoretical convergence analysis, the proposed \text{CFCG} method achieves substantially faster convergence than the \text{CFSD} method.
The numerical results consistently support this observation.

The results presented in the following tables and figures reinforce these findings.
The comparison is based on two key performance metrics: (i) the number of iterations and (ii) the number of function evaluation.
Since the gradient of a quadratic function has the linear form $A x + B$, gradient computations do not contribute to the function-evaluation counter; only evaluations of the objective function and Armijo-Wolfe line-search steps are included.
Following the approach of~\cite{27}, the \text{CFSD} method employs a fixed step size as defined therein. In the steepest descent method, the number of function evaluation exceeds the number of iterations by one, since an additional function value $f(x^0)$ is computed at initialization to start the procedure.
All simulations were implemented in Python using \text{Google Colab}, with a stopping tolerance of $10^{-4}$ and a maximum of 2000 iterations.
The fractional order was fixed at $\alpha = 0.9$.
To ensure fairness, both methods (\text{CFSD} and \text{CFCG}) were executed within the same code cell, eliminating bias due to independently generated random data.

Table~\ref{Table1} presents the results for the $\beta^{FR}$ formulation of the \text{CFCG} method, demonstrating its superior performance for every tested value of $\gamma$ compared with the CFSD method.
The first graph of Figure~\ref{fig1} further illustrates this, showing that CFSD requires substantially more iterations to reach comparable accuracy.

Similarly, the second row of Table~\ref{Table1} and the second graph of Figure~\ref{fig1} report results for the $\beta^{CD}$ variant, again confirming the advantage of the CFCG method. The convergence profile in this figure clearly demonstrates the efficiency of the proposed \text{CFCG} method. The \text{CFSD} method (red curve) exhibits a gradual decline in the objective value, indicating slow progress due to its fixed step size. In contrast, the \text{Proposed CFCG} method (green curve) converges rapidly and smoothly toward the optimum, requiring significantly fewer iterations. This behaviour confirms that incorporating fractional-order memory enhances both the convergence speed and stability of the algorithm.
For the remaining formulations ($\beta^{DY},\beta^{PRP}$ and $\beta^{HS}$), the numerical trends continue to favour the CFCG method, indicating faster convergence and enhanced stability compared with CFSD.
Although figures are provided only for the first two variants, the corresponding results for all four $\beta$ formulations consistently support the superior performance of the proposed fractional-order approach.

\end{example}
\begin{table}[htbp]
\centering
\small
\setlength{\tabcolsep}{3.5pt}
\renewcommand{\arraystretch}{0.9}

\begin{tabular}{|c|c|cc|cc|}
\hline
\multirow{2}{*}{$\beta$ Variant} & \multirow{2}{*}{$\gamma$} 
& \multicolumn{2}{c|}{CFSD} 
& \multicolumn{2}{c|}{CFCG } \\
\cline{3-6}
 &  & Iterations & Func. Eval. & Iterations & Func. Eval. 
 \\
\hline
\multirow{6}{*}{$FR$} 
 & 0.5  & 1427 & 1428 & 40 & 411 \\
 & 0.75 & 1656 & 1657 & 42 & 455 \\
 & 1.0  & 1341 & 1342 & 51 & 548 \\
 & 2.0  & 691  & 692  & 45 & 488 \\
 & 3.0  & 471  & 472  & 35 & 348 \\
 & 4.0  & 377  & 378  & 50 & 528 \\
\hline

\hline
\multirow{6}{*}{$CD$} 
 & 0.5  & 2000 & 2001 & 41 & 434 \\
 & 0.75 & 1572 & 1573 & 93 & 980 \\
 & 1.0  & 1248 & 1249 & 57 & 583 \\
 & 2.0  & 682  & 683  & 39 & 430 \\
 & 3.0  & 478  & 479  & 66 & 708 \\
 & 4.0  & 373  & 374  & 54 & 569 \\
\hline

\hline
\multirow{6}{*}{$DY$} 
 & 0.5  & 1760 & 1761 & 63 & 738 \\
 & 0.75 & 1503 & 1504 & 54 & 661 \\
 & 1.0  & 1251 & 1252 & 52 & 628 \\
 & 2.0  & 647  & 648  & 46 & 519 \\
 & 3.0  & 486  & 487  & 82 & 1146 \\
 & 4.0  & 362  & 363  & 44 & 588 \\
\hline

\hline
\multirow{6}{*}{$PRP$} 
 & 0.5  & 1845 & 1846 & 51 & 629 \\
 & 0.75 & 1740 & 1741 & 56 & 623 \\
 & 1.0  & 1347 & 1348 & 79 & 886 \\
 & 2.0  & 731  & 732  & 91 & 1232 \\
 & 3.0  & 484  & 485  & 76 & 1010 \\
 & 4.0  & 386  & 387  & 72 & 917 \\
\hline

\hline
\multirow{6}{*}{$HS$} 
 & 0.5  & 1980 & 1981 & 28 & 313 \\
 & 0.75 & 1507 & 1508 & 23 & 242 \\
 & 1.0  & 1106 & 1107 & 23 & 242 \\
 & 2.0  & 615  & 616  & 41 & 457 \\
 & 3.0  & 444  & 445  & 37 & 391 \\
 & 4.0  & 340  & 341  & 37 & 291 \\
\hline

\hline
\end{tabular}
\caption{Comparison between CFSD and CFCG.}
\label{Table1}
\end{table}
\begin{figure}[H]
  \centering
  \begin{subfigure}[b]{0.47\textwidth}
   \includegraphics[width=\linewidth,height=2.40cm]{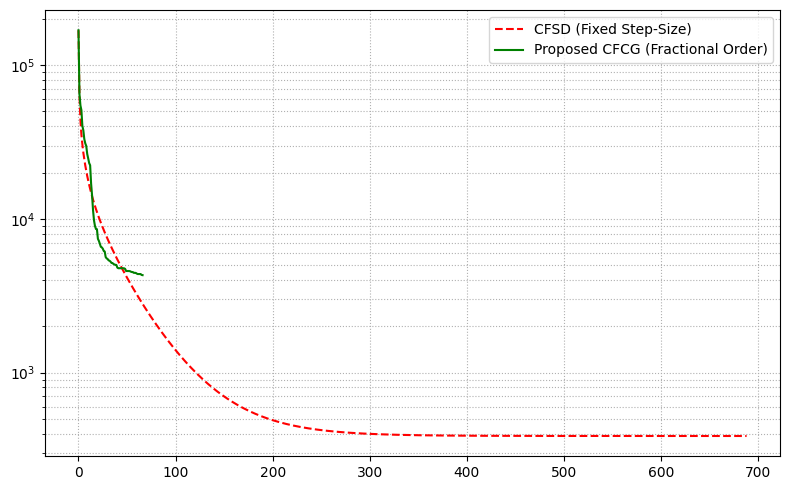}
  \label{fig11}
  \end{subfigure}
  \hfill
    \begin{subfigure}[b]{0.47\textwidth}
    \includegraphics[width=\linewidth,height=2.40cm]{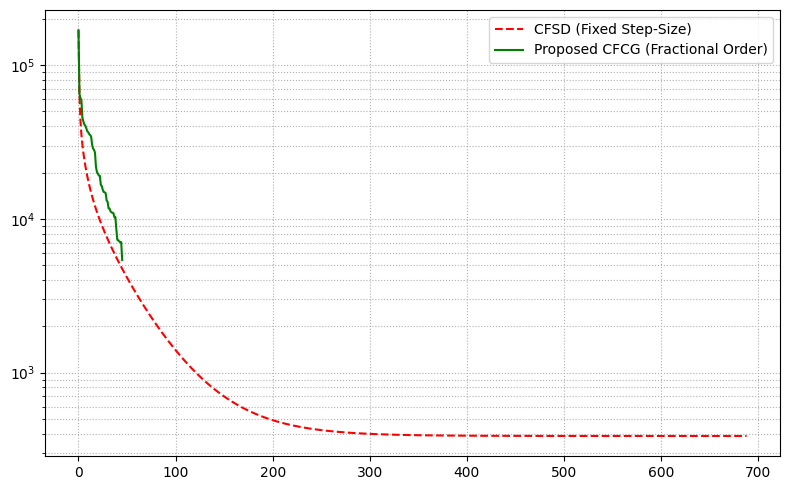}
    \label{fig12}
  \end{subfigure}
    \caption{Convergences result of CFSD and CFCG methods for $\beta^{FR}$ and $\beta^{CD}$.}
  \label{fig1}

\end{figure}
In conclusion, the numerical example proposed by CFCG based on five types of $\beta$ performs better than CFSD in terms of function evaluation and iteration number. Table \ref{Table1} and Figure \ref{fig1} are present here to support.
\begin{example}\label{example2}\textbf{Neural networks:}

In this second numerical experiment, we compare the performance of CFSD and CFCG for training a neural network to approximate three benchmark functions. These functions are defined as
\begin{itemize}
\item[1.]$h_1(z) = \sin(5\pi z)$
\item[2.]$h_2(z) = \sin(2\pi z)\,\exp(-z^2)$
\item[3.]$h_3(z) = \mathbb{1}_{\{z>0\}}(z) + 0.2 \sin(2\pi z)$
\end{itemize}
where the indicator function \( \mathbb{1}_{\{z > 0\}}(z) \) is given by
\[
\mathbb{1}_{\{z > 0\}}(z) = 
\begin{cases} 
1, & \text{if } z > 0, \\
0, & \text{otherwise}.
\end{cases}
\]
The goal of this experiment is to assess the convergence speed, in terms of iteration counts and function evaluation of the two optimization methods when training a single-layer feedforward neural network with 60 hidden units and a \(\tanh\) activation function to approximate these three target functions.

To carry out this comparison, we employ CFCG and CFSD as the optimization procedures for updating the network parameters. CFCG incorporates fractional gradients with memory, allowing for a more flexible update rule that adapts to the loss function's curvature over time, while CFSD uses the Caputo fractional steepest descent approach but with fractional gradients and memory to improve convergence. The CFSD method is implemented with adaptive learning rate selection, where the learning rate is chosen from the grid \(\{0.01, 0.005, 0.001, 0.0005, 0.0001, 0.00005, 0.00001\}\) and Armijo-Wolfe backtracking is employed to ensure stable updates for the CFCG method. For the experiment, we trained the neural network on each of the three target functions. Each function was trained over five trials, each with 100 training points uniformly sampled from the interval \([-1,1]\). The training process was limited to a maximum of 2000 iterations, or until the loss converged to a tolerance of \(10^{-4}\). The fractional order \(\alpha\) was varied across the values \(0.5\), \(0.6\), \(0.7\), \(0.8\), and \(0.9\), and the performance of each optimization method was measured in terms of the number of iterations and function evaluation required to achieve convergence, which is presented in following table.

\begin{table}[htbp]
\centering
\small
\setlength{\tabcolsep}{4pt}
\renewcommand{\arraystretch}{1.15}

\begin{tabular}{c l ccccc c}
\toprule
\multirow{2}{*}{$\alpha$} & \multirow{2}{*}{Metric} &
\multicolumn{5}{c}{CFCG ($\beta$-type)} & CFSD \\
\cmidrule(lr){3-7}
 & & FR & CD & DY & PRP & HS &  \\
\midrule
\multirow{2}{*}{0.5} & Iter. & 4.733 & 3.200 & 6.467 & 6.467 & 6.467 & 33.867 \\
                    & Func.Eva.    & 8398.40 & 7240.00 & 5405.86 & 6069.533 & 5249.00 & 6581.80 \\
\midrule
\multirow{2}{*}{0.6} & Iter. & 5.600 & 3.667 & 6.067 & 6.067 & 6.067 & 39.267 \\
                    & Func.Eva.    & 8060.53 & 8000.20 & 5055.93 & 7481.33  & 6093.66 & 7602.40 \\
\midrule
\multirow{2}{*}{0.7} & Iter. & 4.533 & 3.667 & 4.333 & 4.333 & 4.333 & 41.800 \\
                    &Func.Eva.     & 5128.33 & 6926.26 & 6057.46 & 6697.00  & 5719.60 & 8081.20 \\
\midrule
\multirow{2}{*}{0.8} & Iter. & 3.800 & 2.533 & 5.467 & 5.467 & 5.467 & 46.867 \\
                    &Func.Eva.    & 6781.46 & 5659.26 & 5224.86 & 7119.33  & 5659.26 & 9038 \\
\midrule
\multirow{2}{*}{0.9} & Iter. & 5.067 & 3.667 & 6.400 & 6.400 & 6.400 & 31.800 \\
                    & Func.Eva.     & 8338.06 & 7288.26 & 5852.33 & 6612 & 6286.73 & 6191 \\
\bottomrule
\end{tabular}

\caption{Mean iterations and mean function evaluation for different values of $\alpha$ for CFCG (with various $\beta$-types) and for CFSD.}
\label{tab:mean-iters-alpha-beta}
\end{table}

\begin{figure}[ht]
    \centering
    \begin{subfigure}[b]{0.45\textwidth}
   \includegraphics[width=\linewidth,height=2.30cm]{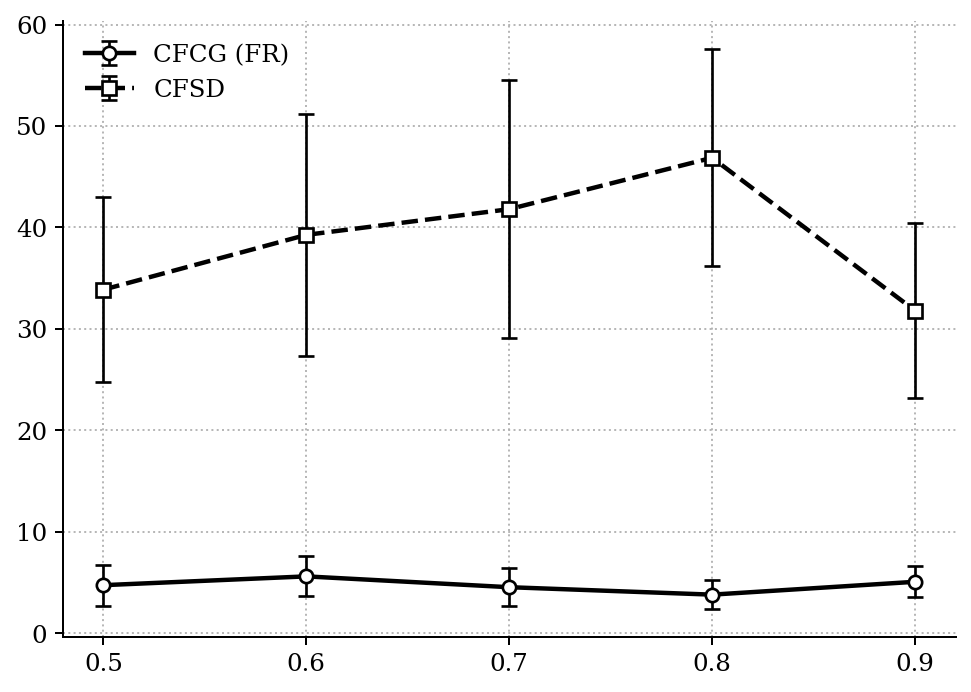}
        \end{subfigure}
    \hfill
    \begin{subfigure}[b]{0.45\textwidth}
   \includegraphics[width=\linewidth,height=2.30cm]{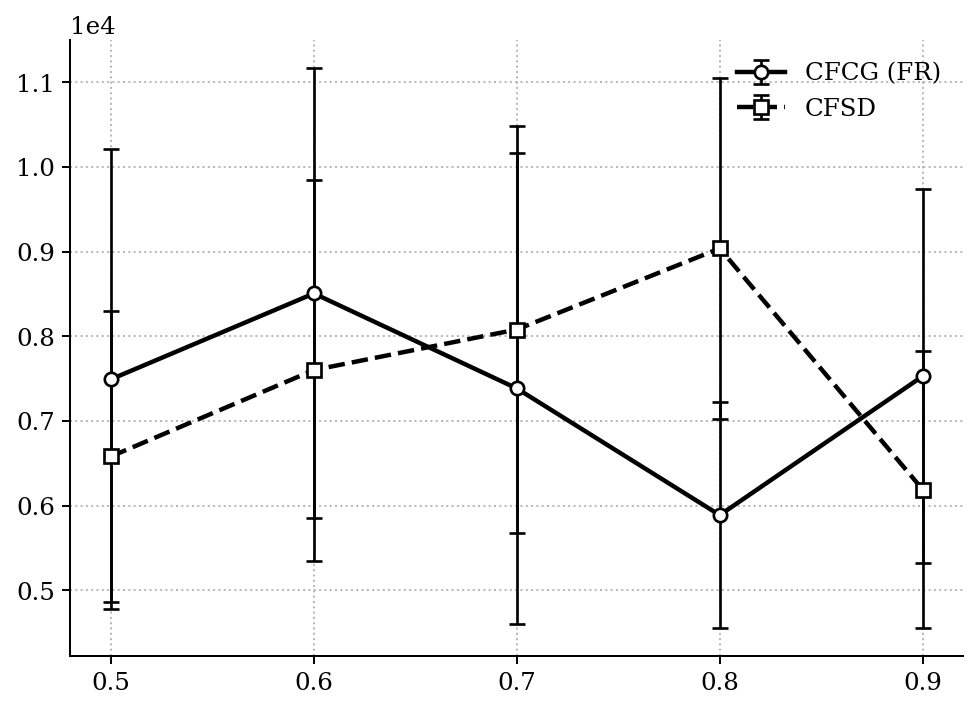}    
         \end{subfigure}
    \hfill
    \caption{Iterations and function evaluation for CFSD and CFCG of $\beta^{FR}_k$ .}
    \label{fig6}
\end{figure}
\begin{figure}[ht]
    \centering
    \begin{subfigure}[b]{0.45\textwidth}
   \includegraphics[width=\linewidth,height=2.30cm]{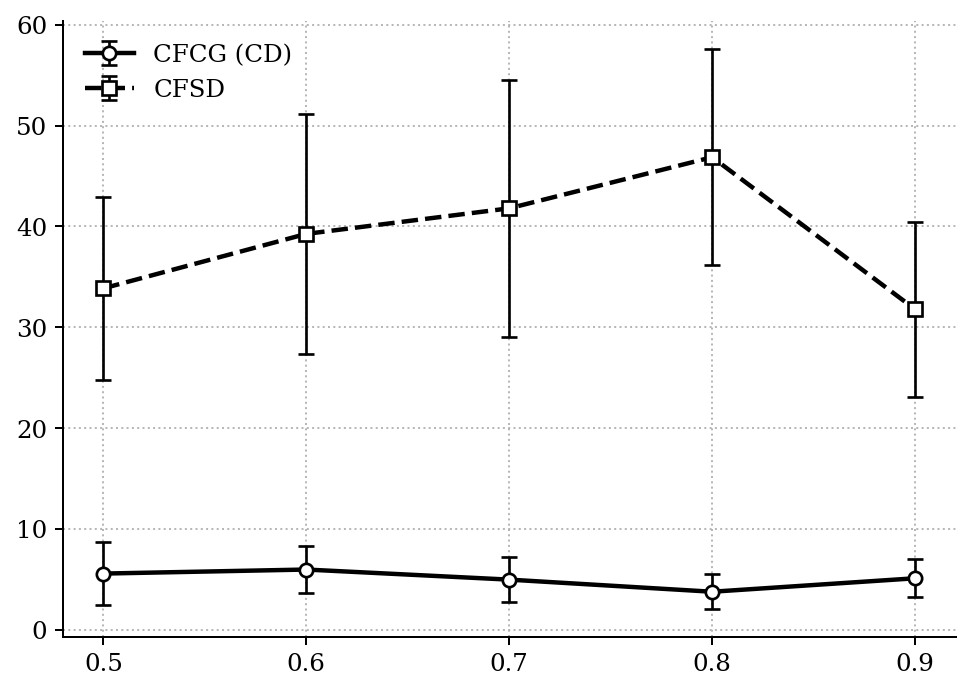}
        \end{subfigure}
    \hfill
    \begin{subfigure}[b]{0.45\textwidth}
   \includegraphics[width=\linewidth,height=2.30cm]{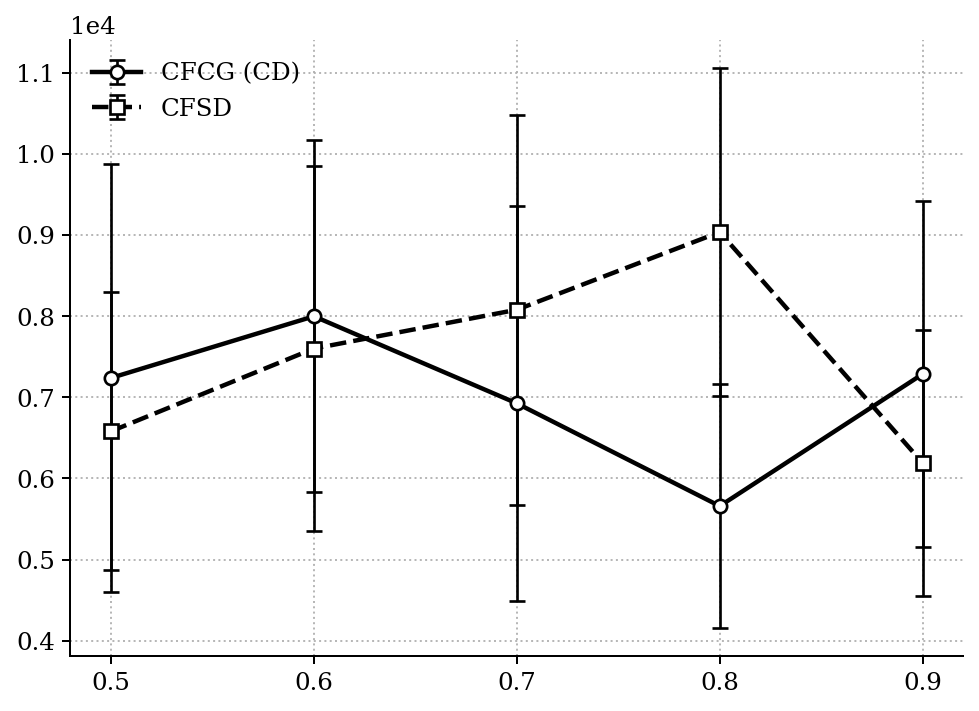}    
         \end{subfigure}
    \hfill
    \caption{Iterations and function evaluation for CFSD and CFCG of $\beta^{CD}_k$ .}
    \label{fig7}
\end{figure}

 Table~\ref{tab:mean-iters-alpha-beta} reports the mean number of iterations and function evaluations required by CFCG and CFSD to reach the stopping criterion. The results are shown for different fractional orders $\alpha$ and for the five classical $\beta$-update formulas $(FR, CD, DY, PRP, HS)$. For all choices of $\alpha$ and $\beta$, the proposed CFCG method converges in very few iterations, typically between about $2.5$ and $6.5$. In contrast, CFSD needs about $32$ to $47$ iterations. Depending on $\alpha$ and the $\beta$-type, this gives roughly a $5$--$18$ times reduction in iteration count compared with CFSD. Among the CFCG variants, the $CD$-type $\beta$ update is consistently the most efficient. It gives the smallest mean iteration numbers for every $\alpha$, with the best performance at $\alpha=0.8$. The $FR$-type $\beta$ update is slightly less aggressive. The $DY$, $PRP$, and $HS$ variants give similar and mildly larger iteration counts, but they are still far below those of CFSD. The dependence on $\alpha$ is moderate. The advantage of CFCG over CFSD is robust for $0.5\le \alpha\le 0.9$. This indicates that fractional-order conjugate-gradient directions can significantly accelerate convergence compared with the fractional steepest-descent baseline. For function evaluations, CFSD uses an adaptive learning rate, while CFCG uses an Armijo--Wolfe line search. As a result, in many cases CFCG takes more function evaluations than CFSD. Figures~\ref{fig6} and \ref{fig7} are included to support these results.

In conclusion, this study compared CFCG and CFSD for training a neural network to approximate three benchmark functions. It shows that CFCG consistently outperformed CFSD in convergence speed, requiring fewer iterations to reach comparable or better test errors. The iteration count is small, but each iteration can require many function evaluations. This is because we use finite-difference gradients and a line search, which call the function many times. The results suggest that the Caputo Fractional Conjugate Gradient method is a promising optimization technique for faster convergence, particularly with fractional gradient updates. Therefore, CFCG can be a useful candidate for improving the efficiency of neural network training, especially when the loss function is complex and contains non-smooth regions.

\end{example}
\begin{example}\label{example3}\textbf{Neural networks classifier:}
To demonstrate the performance of the proposed method, experiments were conducted on two benchmark datasets: Fashion MNIST \cite{f-MNIST} and KMNIST \cite{KMNIST}. Both datasets consist of grayscale images of size $28 \times 28$ pixels with $10$ class labels. All pixel values were normalized to the range $[-1, 1]$ using a mean and standard deviation of $0.5$. The datasets were processed as follows.

\begin{itemize}
    \item \text{Fashion MNIST:} Contains $60{,}000$ training and $10{,}000$ test images of fashion items (e.g., T-shirt, sneaker, bag).
    \item \text{KMNIST:} Comprises $60{,}000$ training and $10{,}000$ test images of handwritten Hiragana characters. For a subset of experiments, the datasets were restricted to the first five classes (labels $0$--$4$), resulting in a simplified $5$-class classification task.
\end{itemize}

The classification model used in all experiments is a single-hidden-layer fully connected neural network with ReLU activation. The structure is specified as follows.

\begin{itemize}
    \item Input: Flattened $28 \times 28$ grayscale image ($784$ features).
    \item Hidden layer: $128$ units, followed by ReLU activation.
    \item Output layer: $10$ units (corresponding to the number of classes).
\end{itemize}

In this numerical experiment, a comparison was made between SSD and the proposed CFCG algorithm. All models were trained for $5$ epochs, which means that each training sample was processed five times. A batch size of $64$ was used, and the standard cross-entropy loss function was minimized for multiclass classification. The experiments were implemented in Python using PyTorch and executed in the Google Colab environment. Core PyTorch modules such as \texttt{torch.nn}, \texttt{torch.utils.data}, and \texttt{torchvision} were utilized for model construction and data handling. The SSD method works stochastically, whereas CFCG works for all datasets differently in each epoch.

During training, the loss and test accuracy were recorded in each epoch for both optimizers. These metrics were visualized using Matplotlib. Two subplots display the evolution of training loss and test accuracy versus epoch for both methods. In addition, a prediction visualization was included: ten randomly selected test images were shown along with their true class and the predictions made by both SSD and CFCG. To further illustrate performance differences, test images were identified in which SSD produced incorrect predictions while CFCG produced correct ones. These cases were also visualized with both predictions and the true label, enabling qualitative analysis of the improvements provided by the proposed method.

This section presents the performance comparison between SSD and CFCG across five $\beta$ strategies. Tables~\ref{Table:3} report training loss and test accuracy over five epochs on Fashion MNIST and KMNIST datasets. For the $FR$ configuration, CFCG achieved $86.20\%$ accuracy on Fashion MNIST and $90.12\%$ on KMNIST by the 5\textsuperscript{th} epoch, while SSD reached only $82.10\%$ and $80.54\%$, respectively. The $CD$ variant further amplified this gain, with CFCG attaining $86.85\%$ accuracy on Fashion MNIST and $89.98\%$ on KMNIST, compared to SSD's $76.11\%$ and $74.62\%$. In the $DY$ setting, CFCG reached $82.10\%$ on Fashion MNIST and $82.18\%$ on KMNIST, whereas SSD lagged at $76.11\%$ and $74.62\%$. For PRP, CFCG recorded $81.82\%$ (Fashion MNIST) and $82.54\%$ (KMNIST), in contrast to SSD’s $76.11\%$ and $74.62\%$. The $HS$ strategy also favoured CFCG, yielding $82.38\%$ on Fashion MNIST and $81.94\%$ on KMNIST, while SSD remained at $76.11\%$ and $74.62\%$.

Figures~\ref{KMNIST_1_FR_} and \ref{KMNIST_1_CD_} present the convergence plots for Fashion MNIST. In each case, the loss curve for CFCG decreased more sharply, and the accuracy curve consistently surpassed that of SSD. Similar trends were observed on KMNIST in Figures~\ref{KMNIST_1_FR_}, \ref{KMNIST_1_CD_}, where CFCG stabilized more quickly and outperformed SSD in all epochs. Visual predictions further highlight these differences. Figures~\ref{KMNIST_2_FR_}, \ref{KMNIST_2_CD_}, \ref{KMNIST_2_DY_}, \ref{KMNIST_2_PPR_}, and \ref{KMNIST_2_HS_} display qualitative output comparisons for Fashion MNIST and KMNIST, where CFCG provided more accurate and visually consistent class predictions even more pronounced improvements, especially in challenging handwritten character recognition.

The KMNIST data set is considered more difficult due to inherent ambiguities in handwritten character shapes. Nevertheless, robust classification performance and generalization were consistently demonstrated by CFCG. In conclusion, the effectiveness of the CFCG method was validated through the results shown in Table~\ref{Table:3} and Figures~\ref{KMNIST_1_FR_}--\ref{KMNIST_2_HS_}. Superior convergence and accuracy across multiple update strategies and datasets were achieved consistently. All corresponding tables, convergence plots, and prediction visualizations have been provided below to support this comparative evaluation.
   \begin{table}[htbp]
\centering

\begin{tabular}{|c|c|c|c|}
\hline
\multirow{2}{*}{$\beta$} & \multirow{2}{*}{Epoch} & \multicolumn{1}{c|}{MNIST (SSD / CFCG)} & \multicolumn{1}{c|}{KMNIST (SSD / CFCG)} \\
\cline{3-4}
 & & Loss / Acc. & Loss / Acc. \\
\hline
\multirow{5}{*}{$FR$}
 & 1 & 0.9878 / 76.21 | 0.5716 / 82.95 & 0.8301 / 74.32 | 0.4114 / 83.30 \\
 & 2 & 0.6051 / 79.22 | 0.4195 / 84.70 & 0.4215 / 77.28 | 0.2281 / 85.74 \\
 & 3 & 0.5385 / 80.64 | 0.3816 / 85.23 & 0.3650 / 79.70 | 0.1723 / 88.18 \\
 & 4 & 0.5026 / 81.62 | 0.3553 / 85.58 & 0.3378 / 80.14 | 0.1403 / 89.24 \\
 & 5 & 0.4797 / 82.10 | 0.3375 / 86.20 & 0.3196 / 80.54 | 0.1156 / 90.12 \\
\hline
\multirow{5}{*}{$CD$}
 & 1 & 1.6704 / 68.16 | 0.5891 / 82.63 & 1.4702 / 67.76 | 0.4267 / 84.02 \\
 & 2 & 1.0259 / 72.37 | 0.4191 / 84.19 & 0.8609 / 70.88 | 0.2269 / 86.74 \\
 & 3 & 0.8261 / 74.05 | 0.3810 / 85.25 & 0.6569 / 72.40 | 0.1735 / 88.22 \\
 & 4 & 0.7376 / 75.14 | 0.3547 / 85.19 & 0.5579 / 73.72 | 0.1407 / 89.00 \\
 & 5 & 0.6863 / 76.11 | 0.3380 / 86.85 & 0.5008 / 74.62 | 0.1205 / 89.98 \\
\hline
\multirow{5}{*}{$DY$}
 & 1 & 1.6704 / 68.16 | 0.9604 / 75.50 & 1.4702 / 67.76 | 0.7721 / 75.36 \\
 & 2 & 1.0259 / 72.37 | 0.5925 / 79.86 & 0.8609 / 70.88 | 0.4044 / 78.42 \\
 & 3 & 0.8261 / 74.05 | 0.5267 / 81.08 & 0.6569 / 72.40 | 0.3505 / 80.62 \\
 & 4 & 0.7376 / 75.14 | 0.4923 / 81.84 & 0.5579 / 73.72 | 0.3215 / 81.16 \\
 & 5 & 0.6863 / 76.11 | 0.4707 / 82.10 & 0.5008 / 74.62 | 0.3005 / 82.18 \\
\hline
\multirow{5}{*}{PRP}
 & 1 & 1.6704 / 68.16 | 0.9501 / 74.67 & 1.4702 / 67.76 | 0.6421 / 75.20 \\
 & 2 & 1.0259 / 72.37 | 0.6173 / 78.92 & 0.8609 / 70.88 | 0.4058 / 76.90 \\
 & 3 & 0.8261 / 74.05 | 0.5484 / 79.61 & 0.6569 / 72.40 | 0.3584 / 79.34 \\
 & 4 & 0.7376 / 75.14 | 0.5136 / 81.44 & 0.5579 / 73.72 | 0.3296 / 81.58 \\
 & 5 & 0.6863 / 76.11 | 0.4934 / 81.82 & 0.5008 / 74.62 | 0.3062 / 82.54 \\
\hline
\multirow{5}{*}{$HS$}
 & 1 & 1.6704 / 68.16 | 0.9574 / 75.69 & 1.4702 / 67.76 | 0.7669 / 75.14 \\
 & 2 & 1.0259 / 72.37 | 0.5942 / 79.96 & 0.8609 / 70.88 | 0.4061 / 78.48 \\
 & 3 & 0.8261 / 74.05 | 0.5279 / 80.99 & 0.6569 / 72.40 | 0.3514 / 80.52 \\
 & 4 & 0.7376 / 75.14 | 0.4940 / 81.65 & 0.5579 / 73.72 | 0.3228 / 80.84 \\
 & 5 & 0.6863 / 76.11 | 0.4727 / 82.38 & 0.5008 / 74.62 | 0.3015 / 81.94 \\
\hline
\end{tabular}
\caption{ Comparison of SSD and CFCG methods across different $\beta$ of Loss / Accuracy (\%) per epoch for MNIST and KMNIST datasets.}
\label{Table:3}
\end{table}

\newpage
\begin{figure}[htbp]
  \centering
  \begin{subfigure}[b]{0.45\textwidth}
   \includegraphics[width=\linewidth,height=2.20cm]{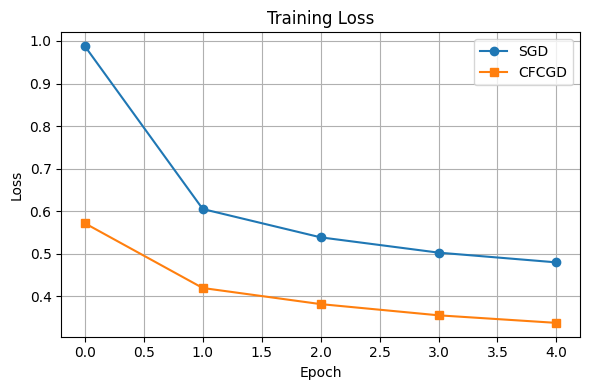}

    \label{MNIST_1}
  \end{subfigure}
  \hfill
    \begin{subfigure}[b]{0.45\textwidth}
    \includegraphics[width=\linewidth,height=2.20cm]{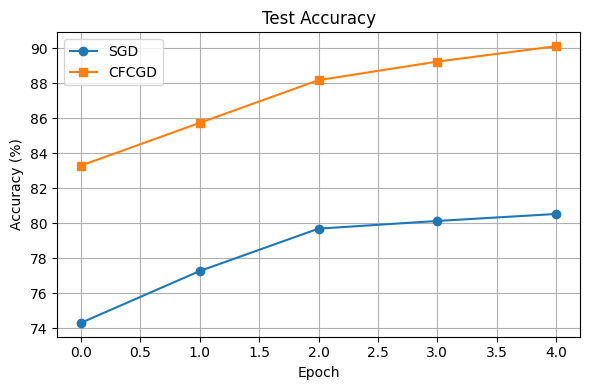}

  \end{subfigure}
  \hfill

  \begin{subfigure}[b]{0.45\textwidth}
   \includegraphics[width=\linewidth,height=2.20cm]{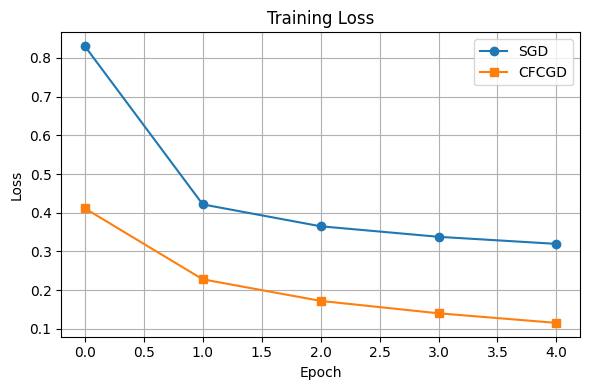}
   
  \end{subfigure}
  \hfill
    \begin{subfigure}[b]{0.45\textwidth}
    \includegraphics[width=\linewidth,height=2.20cm]{F-R_accuracy_KMNIST.png}
 
  \end{subfigure}
    \caption{A convergence behaviour between CFCG and SSD methods for $\beta^{FR}$ of fasion MNIST and KMNIST datasets.}
  \label{KMNIST_1_FR_}

\end{figure}
\hfill
\begin{figure}[!htbp]
  \centering

  \begin{subfigure}[b]{0.24\textwidth}
    \includegraphics[width=\linewidth,height=3cm]{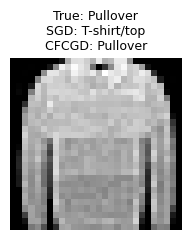}

  \end{subfigure}
  \hfill
  \begin{subfigure}[b]{0.24\textwidth}
    \includegraphics[width=\linewidth,height=3cm]{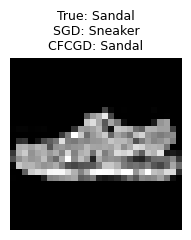}
   
  \end{subfigure}
  \hfill
  \begin{subfigure}[b]{0.24\textwidth}
    \includegraphics[width=\linewidth,height=3cm]{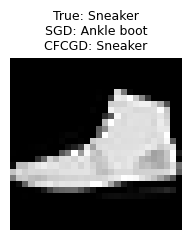}
   
  \end{subfigure}
  \hfill
  \begin{subfigure}[b]{0.24\textwidth}
    \includegraphics[width=\linewidth,height=3cm]{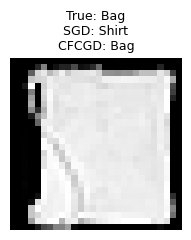}
   
  \end{subfigure}


  \begin{subfigure}[b]{0.24\textwidth}
    \includegraphics[width=\linewidth,height=3cm]{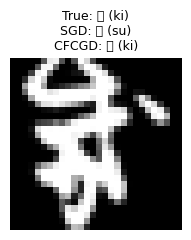}
    
  \end{subfigure}
  \hfill
  \begin{subfigure}[b]{0.24\textwidth}
    \includegraphics[width=\linewidth,height=3cm]{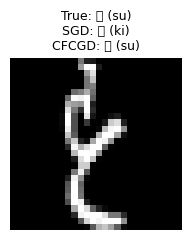}
  
  \end{subfigure}
  \hfill
  \begin{subfigure}[b]{0.24\textwidth}
    \includegraphics[width=\linewidth,height=3cm]{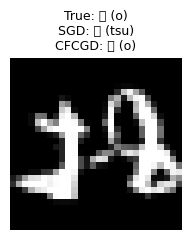}
 
  \end{subfigure}
  \hfill
  \begin{subfigure}[b]{0.24\textwidth}
    \includegraphics[width=\linewidth,height=3cm]{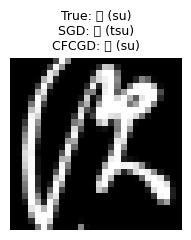}

  \end{subfigure}

  \caption{Visualization of fashion MNIST and KMNIST of $\beta^{FR}$.}
  \label{KMNIST_2_FR_}
\end{figure}



\begin{figure}[htbp]
  \centering
  \begin{subfigure}[b]{0.45\textwidth}
   \includegraphics[width=\linewidth,height=2.20cm]{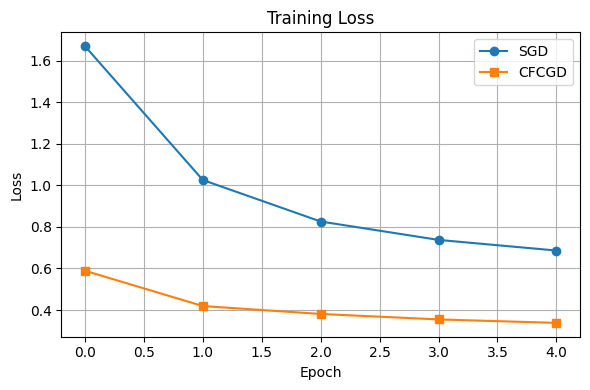}

  \end{subfigure}
  \hfill
    \begin{subfigure}[b]{0.45\textwidth}
    \includegraphics[width=\linewidth,height=2.20cm]{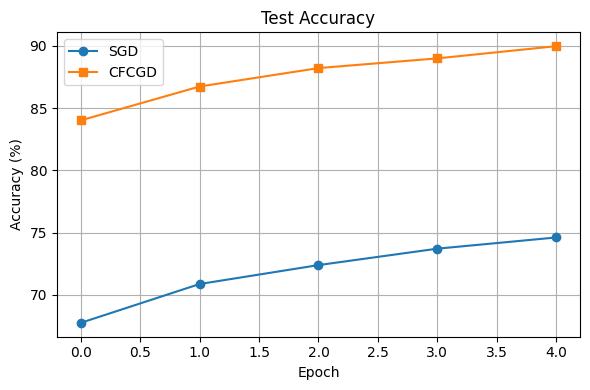}

   \end{subfigure}
   \begin{subfigure}[b]{0.45\textwidth}
   \includegraphics[width=\linewidth,height=2.20cm]{C-D_loss_MNIST.png}
   
  \end{subfigure}
  \hfill
    \begin{subfigure}[b]{0.45\textwidth}
    \includegraphics[width=\linewidth,height=2.20cm]{C-D_accuracy_KMNIST.png}
 
  \end{subfigure}
    \caption{A convergence behaviour between CFCG and SSD methods for $\beta^{CD}$ of fashion MNIST and KMNIST datasets.}
  \label{KMNIST_1_CD_}

\end{figure}
\hfill
\begin{figure}[H]
  \centering

  \begin{subfigure}[b]{0.24\textwidth}
    \includegraphics[width=\linewidth,height=3cm]{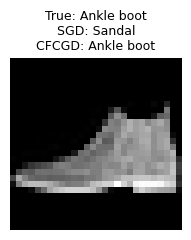}

  \end{subfigure}
  \hfill
  \begin{subfigure}[b]{0.24\textwidth}
    \includegraphics[width=\linewidth,height=3cm]{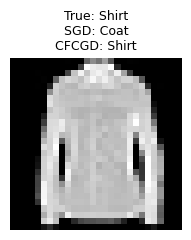}
   
  \end{subfigure}
  \hfill
  \begin{subfigure}[b]{0.24\textwidth}
    \includegraphics[width=\linewidth,height=3cm]{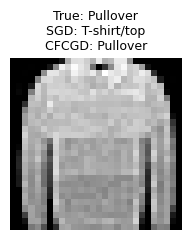}
   
  \end{subfigure}
  \hfill
  \begin{subfigure}[b]{0.24\textwidth}
    \includegraphics[width=\linewidth,height=3cm]{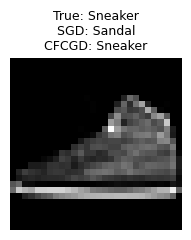}
   
  \end{subfigure}



  \begin{subfigure}[b]{0.24\textwidth}
    \includegraphics[width=\linewidth,height=3cm]{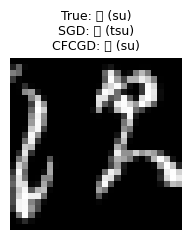}
    
  \end{subfigure}
  \hfill
  \begin{subfigure}[b]{0.24\textwidth}
    \includegraphics[width=\linewidth,height=3cm]{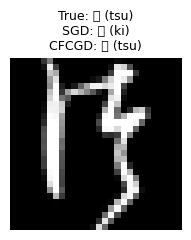}
  
  \end{subfigure}
  \hfill
  \begin{subfigure}[b]{0.24\textwidth}
    \includegraphics[width=\linewidth,height=3cm]{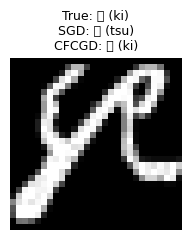}
 
  \end{subfigure}
  \hfill
  \begin{subfigure}[b]{0.24\textwidth}
    \includegraphics[width=\linewidth,height=3cm]{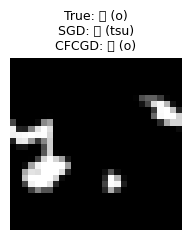}

  \end{subfigure}

  \caption{Visualization of fashion MNIST and KMNIST of $\beta^{CD}$.}
  \label{KMNIST_2_CD_}
\end{figure}

\begin{figure}[!htbp]
  \centering

  \begin{subfigure}[b]{0.24\textwidth}
    \includegraphics[width=\linewidth,height=3cm]{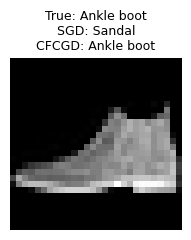}

  \end{subfigure}
  \hfill
  \begin{subfigure}[b]{0.24\textwidth}
    \includegraphics[width=\linewidth,height=3cm]{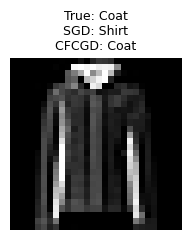}
   
  \end{subfigure}
  \hfill
  \begin{subfigure}[b]{0.24\textwidth}
    \includegraphics[width=\linewidth,height=3cm]{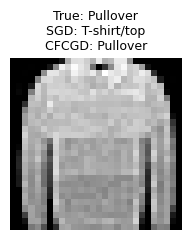}
   
  \end{subfigure}
  \hfill
  \begin{subfigure}[b]{0.24\textwidth}
    \includegraphics[width=\linewidth,height=3cm]{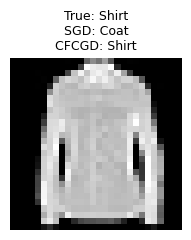}
   
  \end{subfigure}



  \begin{subfigure}[b]{0.24\textwidth}
    \includegraphics[width=\linewidth,height=3cm]{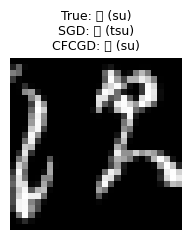}

  \end{subfigure}
  \hfill
  \begin{subfigure}[b]{0.24\textwidth}
    \includegraphics[width=\linewidth,height=3cm]{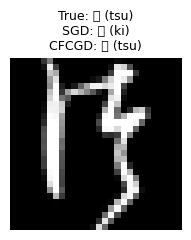}
   
  \end{subfigure}
  \hfill
  \begin{subfigure}[b]{0.24\textwidth}
    \includegraphics[width=\linewidth,height=3cm]{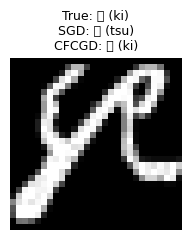}
   
  \end{subfigure}
  \hfill
  \begin{subfigure}[b]{0.24\textwidth}
    \includegraphics[width=\linewidth,height=3cm]{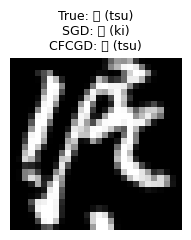}
   
  \end{subfigure}

  \caption{Visualization of fashion MNIST and KMNIST of $\beta^{DY}$.}
  \label{KMNIST_2_DY_}
\end{figure}
\begin{figure}[!htbp]
  \centering

  \begin{subfigure}[b]{0.24\textwidth}
    \includegraphics[width=\linewidth,height=3cm]{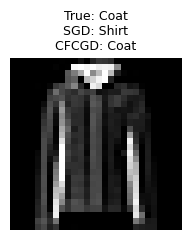}

  \end{subfigure}
  \hfill
  \begin{subfigure}[b]{0.24\textwidth}
    \includegraphics[width=\linewidth,height=3cm]{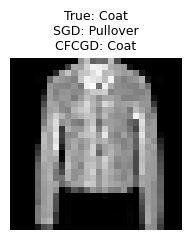}
   
  \end{subfigure}
  \hfill
  \begin{subfigure}[b]{0.24\textwidth}
    \includegraphics[width=\linewidth,height=3cm]{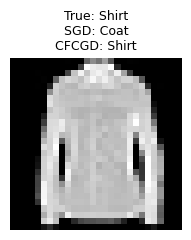}
   
  \end{subfigure}
  \hfill
  \begin{subfigure}[b]{0.24\textwidth}
    \includegraphics[width=\linewidth,height=3cm]{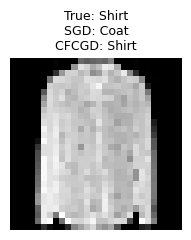}
   
  \end{subfigure}


  \begin{subfigure}[b]{0.24\textwidth}
    \includegraphics[width=\linewidth,height=3cm]{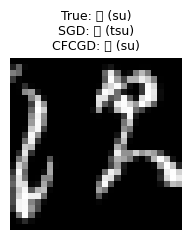}

  \end{subfigure}
  \hfill
  \begin{subfigure}[b]{0.24\textwidth}
    \includegraphics[width=\linewidth,height=3cm]{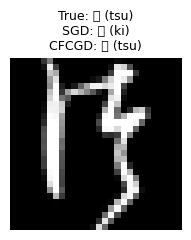}
   
  \end{subfigure}
  \hfill
  \begin{subfigure}[b]{0.24\textwidth}
    \includegraphics[width=\linewidth,height=3cm]{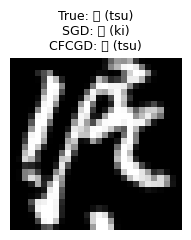}
   
  \end{subfigure}
  \hfill
  \begin{subfigure}[b]{0.24\textwidth}
    \includegraphics[width=\linewidth,height=3cm]{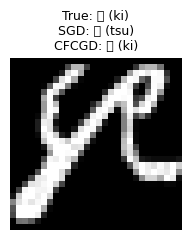}
   
  \end{subfigure}

  \caption{Visualization of fashion MNIST and KMNIST of $\beta^{PRP}$.}
  \label{KMNIST_2_PPR_}
\end{figure}
\begin{figure}[H]
  \centering

  \begin{subfigure}[b]{0.24\textwidth}
    \includegraphics[width=\linewidth,height=3cm]{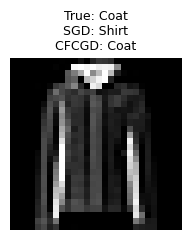}

  \end{subfigure}
  \hfill
  \begin{subfigure}[b]{0.24\textwidth}
    \includegraphics[width=\linewidth,height=3cm]{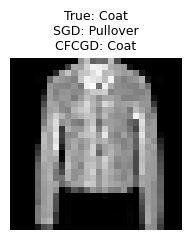}
   
  \end{subfigure}
  \hfill
  \begin{subfigure}[b]{0.24\textwidth}
    \includegraphics[width=\linewidth,height=3cm]{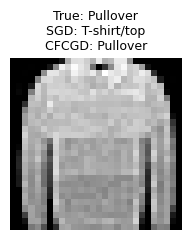}
   
  \end{subfigure}
  \hfill
  \begin{subfigure}[b]{0.24\textwidth}
    \includegraphics[width=\linewidth,height=3cm]{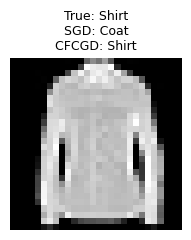}
   
  \end{subfigure}



  \begin{subfigure}[b]{0.24\textwidth}
    \includegraphics[width=\linewidth,height=3cm]{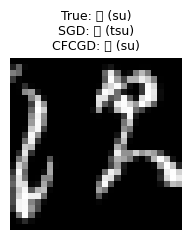}

  \end{subfigure}
  \hfill
  \begin{subfigure}[b]{0.24\textwidth}
    \includegraphics[width=\linewidth,height=3cm]{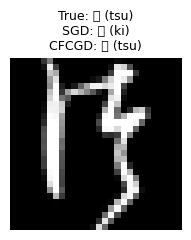}
   
  \end{subfigure}
  \hfill
  \begin{subfigure}[b]{0.24\textwidth}
    \includegraphics[width=\linewidth,height=3cm]{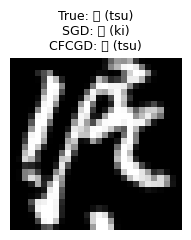}
   
  \end{subfigure}
  \hfill
  \begin{subfigure}[b]{0.24\textwidth}
    \includegraphics[width=\linewidth,height=3cm]{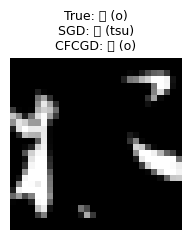}
   
  \end{subfigure}

  \caption{Visualization of fashion MNIST and KMNIST of $\beta^{HS}$.}
  \label{KMNIST_2_HS_}
\end{figure}
\end{example}
\begin{remark}
Numerous choices for \(\beta\) have been proposed; here, five are considered. Performance is application dependent, and no single \(\beta\) can be expected to be uniformly superior across problems, even when descent is ensured.
\end{remark}

\section{Conclusion}\label{sec5}
We have proposed a CFCG method for solving unconstrained single-objective optimization problems. The method is applicable to both smooth and non-smooth functions and is capable of incorporating classical derivative information. Five different forms of the CG parameter \( \beta \) are explored to enhance convergence. The descent property of each direction is ensured using the Armijo-Wolfe conditions, which also prevent excessively small step sizes. Convergence analysis is carried out under mild assumptions using the Tikhonov regularization framework. Numerical experiments, including examples involving neural networks, confirm the effectiveness of the proposed algorithm in reducing iteration count and achieving reliable solutions. 
The CFCG framework may be extended to quasi-Newton and Newton methods. Constrained optimization may be addressed, removing the current restriction to unconstrained problems. Adaptive strategies for selecting the fractional order and capturing memory effects will be investigated.
\section*{Declarations}
\begin{itemize}
    \item {\bf Funding informations:} Author Barsha Shaw appreciates the financial support received from the University Grants Commission (UGC) Fellowship (ID No. 211610020050), Government of India, which supported her Ph.D. work.
    \item {\bf Ethics approval and consent to participate:} Not applicable.
\item {\bf Consent for publication:} The authors declare no consent for publication.
\item {\bf Data availability statement:} The authors have used the fashion MNIST and KMNIST datasets collected from https://docs.pytorch.org/vision/stable/datasets.html.
\item {\bf Materials availability:} The simulation codes are available upon reasonable request from the corresponding author.
\item {\bf Competing interest:} The author declares no conflict of interest.
 \item {\bf Authors' contributions:} Both authors have equally contributed to the theoretical as well as numerical part.
\end{itemize}
 
\end{document}